\numberwithin{equation}{section}
\newtheorem{thrm}{Theorem}[section]
\newtheorem{prop}[thrm]{Proposition}
\newtheorem{dfn}[thrm]{Definition}
\newtheorem{rmrk}[thrm]{Remark}
\def\frh{{\frak h}}
\def\Real{{\frak R}{\frak e}\,}
\def\Imag{{\frak I}{\frak m}\,}
\def\nilm{\Gamma\backslash G}
\def\zzz{{\!\!\!}}
\begin{document}

\begin{abstract}
We show that the property of existence of solution to
the Strominger system in dimension six is neither open nor closed under holomorphic deformations of the complex structure.
These results are obtained both in the case of positive slope parameter as well as in the case of negative
slope parameter in the anomaly cancellation equation.
\end{abstract}

\title[On the Strominger system and holomorphic deformations
]
{On the Strominger system and holomorphic deformations
}
\date{\today}

\author{Stefan Ivanov}
\address[S. Ivanov]{University of Sofia "St. Kl. Ohridski"\\
Faculty of Mathematics and Informatics\\
Blvd. James Bourchier 5\\
1164 Sofia, Bulgaria} \email{ivanovsp@fmi.uni-sofia.bg}

\author{Luis Ugarte}
\address[L. Ugarte]{Departamento de Matem\'aticas\,-\,I.U.M.A.\\
Universidad de Zaragoza\\
Campus Plaza San Francisco\\
50009 Zaragoza, Spain}
\email{ugarte@unizar.es}

\maketitle

\setcounter{tocdepth}{2} \tableofcontents

\section{Introduction}

\noindent
Let $X$ be a compact K\"ahler manifold with holomorphically trivial canonical bundle.
By Yau's solution to the Calabi conjecture \cite{Yau78}, $X$ admits a Ricci-flat K\"ahler metric.
However, there are many compact complex manifolds $X$ with holomorphically trivial canonical bundle
not admitting any K\"ahler metric, and it arises the question of existence of canonical Hermitian metrics
in this setting. Candidates for dealing with this problem are the solutions
to the Strominger system.
In~\cite{Str} Strominger investigated the heterotic superstring
background with non-zero torsion, which led to a complicated system of partial differential equations.
Roughly speaking, the system requires $X$ to be a compact
complex
conformally balanced manifold with holomorphically trivial canonical bundle, equipped with an instanton $A$
satisfying the anomaly cancellation equation
\begin{equation}\label{anomaly-canc}
dT=\frac{\alpha'}{4}\left({\rm tr}\, \Omega\wedge\Omega -{\rm tr}\, \Omega^A\wedge \Omega^A
\right),
\end{equation}
for a non-zero constant $\alpha'$ which is the slope parameter in string theory.
Here $T$ denotes the 3-form defined by the torsion of the Strominger-Bismut connection
of the Hermitian metric,
$\Omega$ is the curvature form of some metric connection
$\nabla$ and $\Omega^A$ is the curvature form of the instanton $A$,
i.e. $\Omega^{A}$ is the curvature form of a connection on a holomorphic hermitian vector bundle $(E,h)$ over $X$ which satisfies the Hermitian-Yang-Mills equation.

Any K\"ahler Calabi-Yau manifold solves the Strominger system taking e.g. $\Omega^A=\Omega$, and in the last years many efforts have been made in
constructing solutions on non-K\"ahler manifolds, more specifically in complex dimension 3.
In \cite{Li-Yau} Li and Yau obtained the first non-K\"ahler solutions to the Strominger system on a K\"ahler Calabi-Yau manifold,
which was
further extended in \cite{A-Gar}.
Based on a construction in
\cite{GP}, Fu and Yau proved the existence of solutions to the
Strominger system on non-K\"ahler Calabi-Yau manifolds given as a
$\mathbb{T}^2$-bundle over a $K3$ surface~\cite{Fu-Yau-alpha-positive,Fu-Yau-alpha-negative}, see \cite{PPZ} for solutions in all dimensions. Recently, Phong, Picard and Zhang investigating the anomaly flow in \cite{PPZ1} recapture the Fu-Yau results. Explicit smooth solutions to the Strominger system on
compact non-K\"ahler Calabi-Yau 3-folds with infinitely many topological types and sets of Hodge numbers are constructed very recently in \cite{FHP} relying on the  generalized Calabi-Gray construction outlined in \cite{Fei}.
Explicit solutions on nilmanifolds have been constructed in \cite{CCDLMZ,FIUV09,FIUV2014,UV2},
and more recently invariant solutions were also found on some solvmanifolds and on compact quotient
of SL(2,$\mathbb{C}$) in \cite{Fei-Yau} and \cite{OUV1}, see also \cite{AGarcia}.
On the other hand, some recent developments in the infinitesimal moduli problem of the Strominger system were presented in \cite{OsSv1,OsSv2}
and the relation of the Strominger system with generalized geometry is obtained for instance in
\cite{Anderson-G-S} and \cite{Gar-Rubio-Tipler}, see the notes \cite{MGarcia} for more results on the subject.

Solutions of the Strominger system under deformations of the SU(3) structure, i.e. holomorphic deformations of the complex structure together with a deformation of the conformally balanced condition is investigated by de la Ossa and Svanes in \cite{Ossa-Svanes}, see also \cite{BT} where the authors consider deformations of the complex
structure only. It is shown in \cite{Ossa-Svanes}  that under deformations of the complex
structure alone the conformally balanced condition gives rise to a complicated PDE which seems to be a difficult equation to satisfy. In this note we confirm in some sense this difficulty.

We say that a compact complex manifold $X$ has the \emph{Strominger property} if $X$ admits a solution to the Strominger system
(see Definition~\ref{Strominger-property}).
Any K\"ahler Calabi-Yau manifold has the Strominger property, and any $X$ satisfying the Strominger property is in particular a balanced
Hermitian manifold.
By the well-known Bogomolov-Tian-Todorov theorem (announced in \cite{Bogomolov} and proved independently in \cite{Tian,Todorov}),
the deformation space of any K\"ahler Calabi-Yau manifold is unobstructed. In contrast to this case,
where any sufficiently small deformation of a K\"ahler Calabi-Yau manifold
is again K\"ahler Calabi-Yau, it was proved in \cite{AB} by Alessandrini and Bassanelli,
relying on Michelsohn's investigations in \cite{Mi},
that the balanced property is not stable under small deformations of the complex structure.
Since the compact complex manifolds $X$ satisfying the Strominger property constitute an important class
between the class of K\"ahler Calabi-Yau manifolds and the class of balanced manifolds, it arises the natural question
of whether that class is stable or not under holomorphic deformations of the complex structure.

Our first goal in this paper is to prove that the Strominger property is not stable.
We will distinguish two cases depending on the
sign of the slope parameter $\alpha'$ in \eqref{anomaly-canc}.
Furthermore, we will also prove that the Strominger property is not closed, i.e.
the central limit of a holomorphic family of solutions of the Strominger system may not
admit any solution.

\smallskip

The paper is structured as follows. In Section~\ref{sec-general} we recall some terminology and results on the
Strominger system.
In Section~\ref{sec-open} we construct a holomorphic family $(X_\mathbf{t})_{\mathbf{t} \in \Delta}$
such that $X_0$ admits a solution to the Strominger system
with positive $\alpha'$, but
there does not exist any balanced metric on $X_\mathbf{t}$ for arbitrary small values of $\mathbf{t}\not=0$
(see Proposition~\ref{X0-Strominger} and Theorem~\ref{Strominger-no-open}).
Hence, the Strominger property with positive slope parameter $\alpha'$ is not stable.
For the construction of such a family we first analyze the space of invariant complex structures on the
manifold underlying the Iwasawa manifold,
which leads us to consider a compact complex manifold $X_0$ satisfying certain specific condition on its
Bott-Chern cohomology group of bidegree~(2,2).

In Section~\ref{sec-closed} we prove that the Strominger property with positive slope parameter $\alpha'$
is not closed under holomorphic deformations. Indeed,
there is a holomorphic family of compact complex manifolds $(X_\mathbf{t})_{\mathbf{t}\in \Delta}$
such that $X_\mathbf{t}$ has a solution to the Strominger system with $\alpha'>0$ for any $\mathbf{t} \in \Delta-\{0\}$,
but there does not exist any such solution on its central limit $X_0$ (see Theorem~\ref{no-closed}).
To prove this result, we further study the family $(X_\mathbf{t})_{\mathbf{t} \in \Delta}$ constructed in \cite{COUV} to answer a question in \cite{Pop2}
about the closedness of the balanced property. We prove that for any $\mathbf{t}\not=0$,
the manifold $X_\mathbf{t}$ has a solution to the Strominger system with $\alpha'>0$. Hence, the non-closedness
result comes from the fact that the central limit $X_0$ does not admit any balanced metric.

Section~\ref{sec-negative} is devoted to the negative slope parameter case, and we show that again
the Strominger property with $\alpha'<0$ is neither open nor closed under holomorphic deformations.
It should be remarked that the results obtained in
Sections~\ref{sec-open} and~\ref{sec-closed} for $\alpha'>0$ cannot be applied directly to the
$\alpha'<0$ case. However, we make use of a non-flat instanton $A$ constructed in \cite{CCDLMZ} to reverse the sign
of the slope parameter in the solutions found in Sections~\ref{sec-open} and~\ref{sec-closed}.

\section{The Strominger system}\label{sec-general}

\noindent
The Strominger system \cite{Str,Hul} is a system of partial differential equations characterizing the compactification
of heterotic superstrings with torsion. The bosonic fields of the ten-dimensional supergravity which arises as low
energy effective theory of the heterotic string are the spacetime metric $g$,
the NS three-form field strength (flux) $H$, the dilaton $\phi$ and the
gauge connection $A$ with curvature 2-form $\Omega^A$. The bosonic geometry is of
the form $\mathbb{R}^{1,9-d}\times X^d$, where the bosonic fields are
non-trivial only on $X^d$, $d\leq 8$. We consider the  connection
$\nabla^+=\nabla^g + \frac12 H$,
where $\nabla^g$ is the Levi-Civita connection of the Riemannian metric~$g$.
This connection preserves the metric, $\nabla^+g=0$, and has totally
skew-symmetric torsion $H$.

A heterotic geometry preserves supersymmetry if and only if in ten
dimensions there exists at least one Majorana-Weyl spinor $\epsilon$ such
that the following Killing-spinor equations hold \cite{Str,Berg}
\begin{equation} \label{sup1}
\nabla^+\epsilon=0, \quad (d\phi-\frac12H)\cdot\epsilon=0, \quad \Omega^A\cdot\epsilon=0,
\end{equation}
where $\cdot$ means Clifford action of forms on
spinors.

The system of Killing spinor equations \eqref{sup1} together with the
anomaly cancellation condition \eqref{anomaly-canc} is known as the \emph{%
Strominger system} \cite{Str}.

In dimension $d=6$, Strominger shows in \cite{Str} that the first two equations in \eqref{sup1} imply the 6-manifold $X$ is a compact conformally balanced Hermitian manifold with holomorphically trivial canonical bundle. Indeed, the first equation means that the holonomy group of the connection $\nabla^+$ is contained in SU(3), i.e. the manifold $X$ admits a $\nabla^+$-parallel almost Hermitian structure $(g,J)$ with a nowhere vanishing $\nabla^+$-parallel $(3,0)$-form $\Psi_0$, and the second equation forces the almost complex structure $J$ to be integrable, the three form $H=-dF(J.,J.,J.)=JdF$, where $F$ is the K\"ahler 2-form, $F=g(.,J.)$, and the Lee form $\theta$ to be exact, $\theta=\delta F\circ J=2d\phi$.
Moreover, it is easy to see that the (3,0)-form $\Psi=e^{-2\phi}\Psi_0$ is closed, and therefore holomorphic, and the metric $\bar g=e^{-2\phi}g$ is balanced, i.e. its Lee form vanishes which is equivalent to the K\"ahler form $\bar F$ to be co-closed, $\delta\bar F=0$
(see \cite{Mi} for properties and  examples of balanced Hermitian manifolds).

The last equation in \eqref{sup1} is the
instanton condition which means that the curvature $\Omega^A$ is contained in the
Lie algebra $\mathfrak{s}\mathfrak{u}(3)$ of the Lie group SU(3), i.e. $\Omega^A\wedge F^2=0$ and
$(\Omega^A)^{0,2}=(\Omega^A)^{2,0}=0$, which is the well-known Donaldson-Uhlenbeck-Yau instanton.

\smallskip

Thus, let $X$ be a (compact) complex 3-fold with holomorphically trivial canonical bundle and fix a nowhere vanishing holomorphic (3,0)-form $\Psi$. Let $F$ be a Hermitian metric on $X$ and denote by $||\Psi||_F$ the norm of $\Psi$ with respect to the metric $F$. Li and Yau showed in \cite{Li-Yau} that the first two equations in \eqref{sup1} are equivalent to single equation $d(||\Psi||_F \cdot F^2)=0$.
Let $(E,h, A)$ be a holomorphic hermitian vector bundle  over $X$   with curvature 2-form $\Omega^A$.
According to \cite{Li-Yau}, the Strominger system can be written as follows
\begin{equation}\label{sup2}
\begin{aligned}
&d(||\Psi||_F \cdot F^2)=0, \\
& \Omega^A\wedge F^2=0,\quad (\Omega^A)^{0,2}=(\Omega^A)^{2,0}=0,\\
& d(JdF)=\frac{\alpha'}{4}\left({\rm tr}\, \Omega\wedge\Omega -{\rm tr}\, \Omega^A\wedge \Omega^A
\right).
\end{aligned}
\end{equation}

The connection $\nabla^+$ with totally skew-symmetric torsion preserving the Hermitian structure $(g,J,F)$ is unique determined by the torsion 3-form $T=H=JdF=-dF(J.,J.,J.)$. This connection was used by Bismut to prove a local index formula for the Dolbeault
operator on non-K\"ahler Hermitian manifolds  \cite{Bis} which was applied
in string theory \cite{BBDGE}. This remarkable connection is known as the Strominger-Bismut connection.

\smallskip

We introduce the following definition.

\begin{dfn}\label{Strominger-property}
We will say that a compact complex manifold $X$ has the \emph{Strominger property}
if $X$ admits a solution to \eqref{sup2}, that is to say,
$X$ has holomorphically trivial canonical bundle admitting a conformally balanced Hermitian metric and an instanton bundle $E$
solving the anomaly cancellation equation~\eqref{anomaly-canc}.
\end{dfn}

In this definition we refer to any solution of the Strominger system, that is, we do not restrict the connection or the instanton in the anomaly
cancelation equation.
However, for the solutions that we will construct in the paper it will be enough to
consider the Strominger-Bismut connection, i.e. $\Omega$ in~\eqref{anomaly-canc} will be the curvature of $\nabla=\nabla^+$.
The trivial instanton $A=0$ will be enough in our study of
the Strominger property for the case $\alpha'>0$
(but not for negative $\alpha'$, see Section~\ref{sec-negative}).

\smallskip

Our aim in this paper is to study the Strominger property under holomorphic deformations of the complex structure.
We will use the following terminology.
Let $\Delta$ be an open disc, that we can suppose around the origin, in $\mathbb{C}$.
Following \cite[Definition 1.12]{Pop2},
a given property $\mathcal{P}$ of a compact complex manifold is said to be \emph{open} (or \emph{stable})
under holomorphic deformations if for every holomorphic family of compact complex manifolds
$(X_\mathbf{t})_{\mathbf{t}\in \Delta}$ and for every $\mathbf{t}_0\in \Delta$ the following implication holds:

\medskip

$X_{\mathbf{t}_0}$ has the property $\mathcal{P}$ $\Longrightarrow$ $X_{\mathbf{t}}$ has the property $\mathcal{P}$
for all $\mathbf{t}\in\Delta$ sufficiently close to $\mathbf{t}_0$.

\medskip

A given property $\mathcal{P}$ of a compact complex manifold is said to be \emph{closed}
under holomorphic deformations,
if for every holomorphic family of compact complex manifolds $(X_\mathbf{t})_{\mathbf{t}\in \Delta}$
and for every $\mathbf{t}_0\in \Delta$ the following implication holds:

\medskip

$X_{\mathbf{t}}$ has the property $\mathcal{P}$ for all $\mathbf{t}\in \Delta - \{\mathbf{t}_0\}$ $\Longrightarrow$ $X_{\mathbf{t}_0}$
has the property $\mathcal{P}$.

\medskip

In~\cite{AB} and~\cite{COUV} it is proved, respectively, that the balanced property is neither open nor closed
(in the balanced class).
Roughly speaking, what we will prove in the next sections is that the balanced property is neither open nor closed
even in the Strominger class, both in the case of positive slope parameter as well as in the case of negative
slope parameter. This clearly implies the non-openness and the non-closedness of the Strominger property
under holomorphic deformations.

\section{The Strominger property with positive $\alpha'$ is not stable}\label{sec-open}

\noindent In this section we show that the Strominger property with positive slope parameter $\alpha'$ in \eqref{anomaly-canc}
is not stable by small deformations of the complex structure.
We will prove a stronger result, namely that a small holomorphic deformation of a compact complex manifold satisfying
the Strominger system with $\alpha'>0$
may not admit any balanced metric.

We will focus on the complex geometry of the (real) manifold underlying the Iwasawa manifold.
Let us denote by $M$ this 6-dimensional compact manifold.
It is well known that $M$ is a nilmanifold, i.e. it is a compact manifold $M=\nilm$ obtained as a quotient of a simply-connected
nilpotent Lie group $G$ by a lattice $\Gamma$ of maximal rank.
The Lie algebra of $G$ is isomorphic to $\frh_5=(0,0,0,0,13+42,14+23)$ (in the notation of \cite{Sal}).

We consider \emph{invariant} complex structures $J$ on $M$, that is to say, $J$ is a complex structure
coming from a left-invariant complex structure on $G$ by passing to the quotient.
By \cite{Sal} the canonical bundle of any $(M,J)$ is holomorphically trivial.
To our knowledge, it is an open question if the Iwasawa manifold, i.e. $M$ with its standard complex parallelizable structure,
can admit or not any solution to the Strominger system with $\alpha'>0$ (see Section~\ref{sec-negative} for more details).
Next we look for appropriate invariant complex structures on $M$.

There are many invariant complex structures on $M$, but in order to prove the non-stability of the balanced property
for solutions of the Strominger system with $\alpha'>0$,
we need to find an invariant complex structure $J$ on $M$
admitting such solutions and for which the dimension of the Bott-Chern cohomology group $H^{2,2}_{\rm BC}(M,J)$
does not attain a minimum in the space of all invariant complex structures on $M$.
Indeed, such a choice is necessary for performing later an appropriate holomorphic deformation, since
by \cite[Proposition 4.1]{AU} if $\{X_t\}_{t\in (-\varepsilon,\varepsilon)}$, $\varepsilon > 0$, is any
differentiable family of deformations of a compact balanced manifold $X_0$ of complex dimension $n$
such that the upper-semi-continuous function $t \mapsto  \dim_\mathbb{C} H^{n-1,n-1}_{\rm BC}(X_t)$
is constant, then $X_t$ also admits a balanced metric for any $t$ close enough to $0$.

From \cite[Appendix]{LUV} and \cite[Table 2]{AFR}, it follows that the minimum value of the dimension of the Bott-Chern
cohomology group $H^{2,2}_{\rm BC}(M,J)$ in the space of invariant complex
structures $J$ on the manifold $M$ is equal to 6.
By \cite[Theorem 2.6]{R1}, the complex structure of any sufficiently small deformation
of $(M,J)$ is again invariant.
So, if $(X_\mathbf{t})_{\mathbf{t} \in \Delta}$
is any (sufficiently small) holomorphic deformation of a complex manifold $X_0=(M,J)$
such that $\dim_\mathbb{C} H^{2,2}_{\rm BC}(M,J)=6$,
then the function $\mathbf{t} \mapsto  \dim_\mathbb{C} H^{2,2}_{\rm BC}(X_\mathbf{t})$
is constant.

In conclusion, we need to find a complex structure admitting balanced metrics for which the dimension of its
Bott-Chern cohomology group of bidegree (2,2) is at least 7.
But such complex structures are already classified in \cite[Table~1]{LUV}.
In fact, up to isomorphism of the complex structure (and apart from the Iwasawa manifold)
we are led to consider a complex structure $J^s$ in the following family:
\begin{equation}\label{ecus-JD}
d\omega^1=d\omega^2=0,\quad d\omega^3=\omega^{12} +\omega^{1\bar{1}} -s^2\, \omega^{2\bar{2}}, \quad\quad  s \in (0,1/2).
\end{equation}

From now on, given any $s \in (0,1/2)$, we will consider the compact complex manifold $X^s=(M,J^s)$ of complex dimension 3, where
$M$ is the (real) nilmanifold underlying the Iwasawa manifold and $J^s$ is the (integrable almost) complex structure defined by
the (1,0)-basis $\{\omega^1,\omega^2,\omega^3\}$ satisfying the equations $\eqref{ecus-JD}$.

\smallskip

In the following result we prove that any compact complex manifold $X^s=(M,J^s)$ has the Strominger property
with positive slope parameter in the anomaly cancellation equation.

\begin{prop}\label{X0-Strominger}
For every $s\in (0,\frac{1}{2})$, the compact complex manifold $X^s=(M,J^s)$ admits a solution to the Strominger system
with $\alpha'>0$.
\end{prop}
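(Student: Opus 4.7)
The plan is to construct an invariant solution on $X^s$ with trivial instanton $A=0$ and with $\nabla$ in \eqref{anomaly-canc} the Strominger--Bismut connection $\nabla^+$, following the general strategy for invariant Strominger solutions on nilmanifolds. A first check shows that the canonical bundle of $X^s$ is holomorphically trivialized by $\Psi=\omega^1\wedge\omega^2\wedge\omega^3$; indeed, using \eqref{ecus-JD},
$$d\Psi=\omega^{12}\wedge d\omega^3=\omega^{12}\wedge\bigl(\omega^{12}+\omega^{1\bar{1}}-s^2\,\omega^{2\bar{2}}\bigr)=0,$$
since each summand in the last expression is wedged with a repeated $\omega^1$ or $\omega^2$.

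Next I would consider the three-parameter family of invariant diagonal Hermitian metrics
$$ 2F=\sqi\,\bigl(r^2\,\omega^{1\bar{1}}+u^2\,\omega^{2\bar{2}}+v^2\,\omega^{3\bar{3}}\bigr),\qquad r,u,v>0,$$
on $X^s$. Because $F$ and $\Psi$ are both invariant, $\|\Psi\|_F$ is a positive constant, and the conformally balanced equation in \eqref{sup2} reduces to $dF^2=0$. Since $\omega^1$ and $\omega^2$ are closed, a short computation using \eqref{ecus-JD} shows that $dF^2$ equals $v^2(u^2-s^2 r^2)$ times a fixed nonzero invariant $5$-form, so the choice $u=sr$ yields an invariant balanced Hermitian metric on $X^s$ for every $r,v>0$.

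With such a balanced metric fixed, I would then compute the torsion three-form $T=JdF$, its derivative $dT$, and the curvature $\Omega^+$ of the Bismut connection $\nabla^+=\nabla^g+\tfrac12 T$. All these objects are left-invariant, so the computation reduces to algebraic manipulation in the exterior algebra of the Lie algebra $\frh_5$. Both $dT$ and $\operatorname{tr}\,\Omega^+\!\wedge\Omega^+$ are then invariant real $(2,2)$-forms, and the anomaly equation with $A=0$ becomes the proportionality
$$\operatorname{tr}\,\Omega^+\!\wedge\,\Omega^+=\frac{4}{\alpha'}\,dT$$
in the finite-dimensional space of invariant $(2,2)$-forms on $X^s$.

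The crux, and the step I expect to be the main obstacle, is the explicit verification that $dT$ and $\operatorname{tr}\,\Omega^+\!\wedge\Omega^+$ coincide up to a \emph{positive} proportionality constant for some admissible values of the remaining parameters $r,v$ (with $s\in(0,\tfrac12)$ fixed). Equating coefficients in a basis of invariant $(2,2)$-forms produces both linear constraints on $r,v,s$ (making the two forms parallel) and a scalar equation determining $\alpha'$; the nontrivial point is to arrange that the resulting value of $\alpha'$ is strictly positive. Once this is done the Strominger system is solved. The delicacy of this sign, which depends on the chosen connection and instanton, is precisely why in the negative slope case of Section~\ref{sec-negative} a non-flat instanton has to be introduced to reverse it.
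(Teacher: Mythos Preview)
Your approach is correct and coincides with the paper's: it fixes the particular metric $r=v=1$ (hence $u=s$), computes the Bismut curvature explicitly in an adapted real orthonormal basis, and obtains $dT_s=\frac{2s^2+1}{s^4}\,\omega^{1\bar 1 2\bar 2}$ and ${\rm tr}\,\Omega_s\wedge\Omega_s=\frac{4(4s^2+1)}{s^4}\,\omega^{1\bar 1 2\bar 2}$, yielding $\alpha'=\frac{2s^2+1}{4(4s^2+1)}>0$ with $A=0$. One small correction to your expectations: the ``linear constraints making the two forms parallel'' never arise, because on $(\frh_5,J^s)$ both $dT$ and ${\rm tr}\,\Omega^+\!\wedge\Omega^+$ are automatically scalar multiples of the single invariant $(2,2)$-form $\omega^{1\bar 1 2\bar 2}$, so only the sign of the ratio needs to be checked.
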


\begin{proof}
Firstly, notice that it follows directly from \eqref{ecus-JD} that the $(3,0)$-form $\Psi_s=s\,\omega^{123}$ is holomorphic,
so $X^s$ has holomorphically trivial canonical bundle.

Let us consider the $J^s$-Hermitian metric $F_s$ on $X^s$ given by
\begin{equation}\label{metrica-Js}
F_s= \frac{i}{2} (\omega^{1\bar{1}} +s^2\, \omega^{2\bar{2}}+\omega^{3\bar{3}}).
\end{equation}
Using the structure equations \eqref{ecus-JD}, we have
\begin{eqnarray*}
-4 \, d F_s \wedge F_s \!\!\!&=&\!\!\! d \omega^{3\bar{3}} \wedge (\omega^{1\bar{1}} +s^2\, \omega^{2\bar{2}}+\omega^{3\bar{3}})\\
  \!\!\!&=&\!\!\!(\omega^{12\bar{3}}+\omega^{1\bar{1}\bar{3}} -s^2\, \omega^{2\bar{2}\bar{3}} + \omega^{1\bar{1}3} -s^2\, \omega^{2\bar{2}3}
  -\omega^{3\bar{1}\bar{2}})\wedge (\omega^{1\bar{1}} +s^2\, \omega^{2\bar{2}})\\
  \!\!\!&=&\!\!\!(\omega^{1\bar{1}\bar{3}} + \omega^{1\bar{1}3})\wedge (s^2\, \omega^{2\bar{2}})
  -s^2\, (\omega^{2\bar{2}\bar{3}} + \omega^{2\bar{2}3})\wedge \omega^{1\bar{1}}\\
  \!\!\!&=&\!\!\!0.
\end{eqnarray*}
Hence, $d F_s^2 = 0$, i.e. the metric $F_s$ is a balanced metric on $X_s$.

The torsion 3-form $T_s=J^s dF_s$ is given by
$$
T_s= -\frac{1}{2} (\omega^{12\bar{3}}-\omega^{1\bar{1}\bar{3}} +s^2\, \omega^{2\bar{2}\bar{3}} + \omega^{1\bar{1}3} -s^2\, \omega^{2\bar{2}3}+\omega^{3\bar{1}\bar{2}}).
$$
By \eqref{ecus-JD} we get
\begin{equation}\label{dif-torsion-Ts}
dT_s=d J^s dF_s=\frac{2\,s^2+1}{s^4}\, \omega^{1\bar{1}2\bar{2}}.
\end{equation}

We will consider the Strominger-Bismut connection in the anomaly cancellation condition.
The curvature forms can be obtained more easily in a basis adapted to the SU(3)-structure $(F_s,\Psi_s)$.
Let $\{e^k\}_{k=1}^6$ be the basis of real 1-forms
defined as
$$
e^1= \Real \omega^1, \ \   e^{2} = \Imag \omega^1, \ \  e^3= \Real (s\,\omega^2), \ \
e^{4} = \Imag (s\,\omega^2), \ \  e^5= \Real \omega^3, \ \   e^{6} = \Imag \omega^3,
$$
that is, $\omega^1=e^1+i\, e^2$, $s\, \omega^2=e^3+i\, e^4$, and $\omega^3=e^5+i\, e^6$.
Hence, the complex structure $J^s$ and the SU(3)-structure $(F_s,\Psi_s)$ express as
$$
\begin{array}{lcl}
\zzz & &\zzz J^s e^1=-e^2,\ J^s e^3=-e^4,\ J^s e^5=-e^6,\\[4pt]
\zzz & &\zzz F_s=e^{12}+e^{34}+e^{56}, \\[4pt]
\zzz & &\zzz \Psi_s=(e^{1}+i\,e^{2})\wedge(e^{3}+i\,e^{4})\wedge(e^{5}+i\,e^{6}).
\end{array}
$$

In this adapted basis the curvature 2-forms of the Strominger-Bismut connection, which for simplicity
we will denote by $\Omega^i_j$ instead of $(\Omega_s)^i_j$, are
\begin{eqnarray*}
\Omega^1_2 \!\!\!&=&\!\!\! -4(e^{12}-e^{34}) +\frac{2}{s} (e^{14} + e^{23}) +\frac{2}{s^2} \,e^{34},\\
\Omega^1_3 \!\!\!&=&\!\!\! \Omega^2_4 = -\frac{1}{s^2} (e^{13} + e^{24}),\\
\Omega^1_4 \!\!\!&=&\!\!\! -\Omega^2_3 = -\frac{1}{s^2} (e^{14} - e^{23}),\\
\Omega^1_5 \!\!\!&=&\!\!\! \Omega^2_6 = - \frac{2}{s} e^{46},\\
\Omega^1_6 \!\!\!&=&\!\!\! -\Omega^2_5 = \frac{2}{s} e^{36},\\
\Omega^3_4 \!\!\!&=&\!\!\! 4 (e^{12}-e^{34}) -\frac{2}{s} (e^{14} + e^{23}) +\frac{2}{s^2} \,e^{12},\\
\Omega^3_5 \!\!\!&=&\!\!\! \Omega^4_6 = - \frac{2}{s} e^{26},\\
\Omega^3_6 \!\!\!&=&\!\!\! -\Omega^4_5 = \frac{2}{s} e^{16},\\
\Omega^5_6 \!\!\!&=&\!\!\! -\Omega^1_2-\Omega^3_4 = -\frac{2}{s^2} (e^{12}+e^{34}).
\end{eqnarray*}

Now, a direct calculation shows that
$${\rm tr}\,\Omega_s\wedge\Omega_s=\sum_{1\leq i<j\leq 6} \Omega^i_j\wedge\Omega^i_j = -16\, \frac{4\,s^2+1}{s^6}\, e^{1234}
= 4\, \frac{4\,s^2+1}{s^4}\, \omega^{1\bar{1}2\bar{2}}.$$

Taking $A=0$ (see Remark~\ref{remark-Strominger-no-open} for solutions with non-flat instanton),
from the previous equality and \eqref{dif-torsion-Ts},
we get
$$\frac{2\,s^2+1}{s^4}\, \omega^{1\bar{1}2\bar{2}}
=dT_s=\frac{\alpha'}{4} ({\rm tr}\ \Omega_s\wedge\Omega_s - {\rm tr}\ \Omega^A\wedge\Omega^A)
=4\,\alpha'\, \frac{4\,s^2+1}{s^4} \, \omega^{1\bar{1}2\bar{2}}.
$$
Therefore, the anomaly cancellation equation is solved for positive $\alpha'=\frac{2\,s^2+1}{4(4\,s^2+1)}$.
\end{proof}

Next we want to perform an appropriate holomorphic deformation of the compact complex manifold $X^s=(M,J^s)$.
As we recalled above, since $J^s$ is an invariant complex structure,
by \cite[Theorem 2.6]{R1} any sufficiently small deformation of $J^s$ is given again by an invariant complex structure
on the nilmanifold $M$.
Notice also that the Dolbeault cohomology of $X^s$ is isomorphic to the Lie algebra cohomology, in particular
$$
H^{0,1}(X^s) \cong H^{0,1}(\mathfrak{h}_5, J^s) = \langle [ \omega^{\bar{1}}],[ \omega^{\bar{2}}] \rangle.
$$
Following an idea in \cite[Example 8]{MPPS}, we first consider invariant almost complex structures $J^s_\Phi$, which are
``near'' to $J^s$, defined by the following basis of (1,0)-forms
\begin{equation}\label{def-space-JD}
\left\{
\begin{array}{lcl}
\omega_\Phi^1 \zzz & = &\zzz \omega^1 + \Phi^1_1\, \omega^{\bar{1}} + \Phi^1_2\, \omega^{\bar{2}},\\[4pt]
\omega_\Phi^2 \zzz & = &\zzz \omega^2 + \Phi^2_1\, \omega^{\bar{1}} + \Phi^2_2\, \omega^{\bar{2}},\\[4pt]
\omega_\Phi^3 \zzz & = &\zzz \omega^3,
\end{array}
\right.
\end{equation}
where the coefficients $\Phi^1_1,\Phi^1_2, \Phi^2_1, \Phi^2_2\in \mathbb{C}$.
The integrability of the almost complex structure $J_\Phi^s$ is equivalent to the closedness of the (3,0)-form
$$
\omega_\Phi^{123}=\omega^{123}
+(\Phi^2_1\, \omega^{1\bar{1}} + \Phi^2_2\, \omega^{1\bar{2}} -\Phi^1_1\, \omega^{2\bar{1}} - \Phi^1_2\, \omega^{2\bar{2}})\wedge\omega^3
+ (\Phi^1_1 \Phi^2_2-\Phi^1_2 \Phi^2_1)\omega^{\bar{1}\bar{2}3}.
$$
A direct calculation using \eqref{ecus-JD} and \eqref{def-space-JD} shows that $J_\Phi^s$ is integrable
if and only if
\begin{equation}\label{integrability-JD}
\Phi^1_1 \Phi^2_2-\Phi^1_2 \Phi^2_1 + \Phi^1_2 +s^2\, \Phi^2_1=0.
\end{equation}

In the following result we prove the existence of a direction in the deformation space such that the resulting
compact complex manifolds do not admit any solution to the Strominger system.

\begin{thrm}\label{Strominger-no-open}
There exists a holomorphic family of compact complex manifolds $(X_\mathbf{t})_{\mathbf{t}\in \Delta}$ of complex dimension $3$,
where $\Delta=\{ \mathbf{t}\in \mathbb{C}\mid |\mathbf{t}|< \varepsilon \}$,
$\varepsilon>0$, such that $X_\mathbf{t}$ has holomorphically trivial canonical bundle for any $\mathbf{t}\in\Delta$,
and
\begin{enumerate}
\item[\rm (i)] $X_0$ admits a solution to the Strominger system with $\alpha'>0$, but
\item[\rm (ii)] $X_\mathbf{t}$ does not admit balanced metrics for any $\mathbf{t}\in \Delta\!-\!\{\Imag \mathbf{t}=0\}$.
\end{enumerate}
In particular, the Strominger property is not stable under small deformations of the complex structure.
\end{thrm}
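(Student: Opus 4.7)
The plan is to realize $(X_\mathbf{t})$ as a one-parameter holomorphic curve inside the space of invariant complex structures $J^s_\Phi$ already parameterized in \eqref{def-space-JD}, with central fibre $X_0 = X^s$ for some fixed $s\in(0,1/2)$. Part (i) is then immediate from Proposition~\ref{X0-Strominger}, so the whole content of the theorem lies in part (ii): destroying even the weaker balanced property whenever $\Imag\mathbf{t}\neq 0$. The simplest candidate for the curve is to put a single deformation parameter in play, for instance $\Phi^1_1(\mathbf{t}) = \mathbf{t}$ with $\Phi^1_2 = \Phi^2_1 = \Phi^2_2 = 0$, so that the integrability constraint \eqref{integrability-JD} is identically satisfied; should this choice fail to detect the obstruction below, one can try instead a curve where $\Phi^1_2$ and $\Phi^2_1$ are nontrivially coupled via $\Phi^1_2 + s^2\Phi^2_1 = 0$, again staying inside the integrability locus. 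In either case the (3,0)-form $\omega_\Phi^{123}$ furnishes a nowhere-vanishing holomorphic trivialization of the canonical bundle of $X_\mathbf{t}$ for every $\mathbf{t}\in\Delta$.

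I would then invert \eqref{def-space-JD} to express $\omega^j$ in terms of the deformed coframe $\{\omega_\Phi^j\}$, substitute into \eqref{ecus-JD}, and read off the structure equations of $\frh_5$ relative to $\{\omega_\Phi^j\}$; the resulting coefficients are rational in $\mathbf{t}$ and $\bar{\mathbf{t}}$. Next I would apply the symmetrization principle for invariant structures on nilmanifolds of Fino--Grantcharov type, which guarantees that if $X_\mathbf{t}$ admits any balanced Hermitian metric then it admits an invariant one. It therefore suffices to rule out invariant balanced metrics of the form $F = \frac{i}{2}\sum h_{j\bar k}\,\omega_\Phi^j\wedge\omega_\Phi^{\bar k}$ with $(h_{j\bar k})$ positive-definite Hermitian, for all $\mathbf{t}\in\Delta$ with $\Imag \mathbf{t}\neq 0$.

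The heart of the proof is the resulting algebraic obstruction: expanding $F^2$ and applying $d$ via the new structure equations produces a finite linear system in the entries $h_{j\bar k}$, with coefficients that are explicit rational functions of $\mathbf{t},\bar{\mathbf{t}},s$. I expect this system to force $(h_{j\bar k})$ into a degenerate configuration whenever $\Imag\mathbf{t}\neq 0$ --- typically a diagonal entry such as $h_{3\bar 3}$ is pinned to $0$, in direct contradiction with positive-definiteness of $F$ --- while for $\Imag\mathbf{t}=0$ the constraints collapse to those already solved by $F_s$ in \eqref{metrica-Js}. Once this dichotomy is established, (ii) follows, and the non-stability of the Strominger property is a formal consequence because any Strominger metric is balanced. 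The main obstacle is the bookkeeping required to identify a curve $\Phi(\mathbf{t})$ whose obstruction system has exactly this asymmetric behaviour in $\Imag\mathbf{t}$; this is the same type of fine-tuned invariant computation that underlies \cite{COUV} and the Bott--Chern jump analysis of \cite{LUV}, and those references provide both the classification of invariant balanced Hermitian structures on $\frh_5$ and the template for detecting the disappearance of balanced metrics under small invariant deformations.
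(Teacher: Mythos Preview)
Your overall architecture is exactly right and matches the paper: take $X_0=X^s$, choose a curve in the invariant deformation space \eqref{def-space-JD}, invoke symmetrization, and reduce the balanced question to an invariant linear-algebra problem. The gap is that neither of your two candidate curves does the job, and the obstruction mechanism is not the one you anticipate.

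Your first curve $\Phi^1_1=\mathbf{t}$, $\Phi^1_2=\Phi^2_1=\Phi^2_2=0$ keeps balanced metrics alive. After inverting and normalising one finds
\[
d\eta_\mathbf{t}^3=\eta_\mathbf{t}^{12}+\eta_\mathbf{t}^{1\bar1}+|\mathbf{t}|\,\eta_\mathbf{t}^{2\bar1}-s^2(1-|\mathbf{t}|^2)\,\eta_\mathbf{t}^{2\bar2},
\]
whose coefficients depend only on $|\mathbf{t}|$; and the diagonal metric $F=\tfrac{i}{2}(\eta_\mathbf{t}^{1\bar1}+p^2\eta_\mathbf{t}^{2\bar2}+q^2\eta_\mathbf{t}^{3\bar3})$ is balanced whenever $p^2=s^2(1-|\mathbf{t}|^2)>0$. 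So (ii) fails for this family. Your fallback curve is also problematic: with $\Phi^1_1=\Phi^2_2=0$, the linear relation $\Phi^1_2+s^2\Phi^2_1=0$ is only the tangent to the integrability locus \eqref{integrability-JD}; substituting it back into the full quadratic condition forces $\Phi^1_2=\Phi^2_1=0$, i.e.\ the trivial deformation.

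The paper's choice is $\Phi^1_1=\Phi^2_2=0$, $\Phi^1_2=\mathbf{t}$, $\Phi^2_1=\mathbf{t}/(\mathbf{t}-s^2)$, which is the genuine quadratic solution of \eqref{integrability-JD}. After a further diagonal change of $(1,0)$-basis this reduces to
\[
d\eta_\mathbf{t}^3=\eta_\mathbf{t}^{12}+\eta_\mathbf{t}^{1\bar1}+D(\mathbf{t})\,\eta_\mathbf{t}^{2\bar2},\qquad D(\mathbf{t})=-s^2\,\frac{(\mathbf{t}-|\mathbf{t}|^2-s^2)^2}{|\mathbf{t}-\mathbf{t}^2-s^2|^2},
\]
and \cite[Proposition~2.3]{UV2} says an invariant balanced metric exists iff $p^2+D(\mathbf{t})=0$ for some real $p$. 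The obstruction is therefore not a forced degeneracy like $h_{3\bar3}=0$ but the failure of $D(\mathbf{t})$ to be real: one checks that $\Imag D(\mathbf{t})=0$ on $\Delta(0,s^2)$ iff $\Imag\mathbf{t}=0$. This asymmetry in $\Imag\mathbf{t}$ is precisely what your sketch was hoping for, but it requires the off-diagonal deformation with the nonlinear coupling, not the diagonal one.
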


\begin{proof}
Let $s \in (0,1/2)$ and consider $X=X^s$, where $X^s$ is the compact complex manifold constructed above. By Proposition~\ref{X0-Strominger}
we know that $X$ has the Strominger property with $\alpha'>0$. We will construct a small deformation $X_\mathbf{t}$ of $X_0=X$ for which
there are non-zero $\mathbf{t}$'s arbitrary close to $0$ such that
the compact complex manifold $X_\mathbf{t}$ does not admit any balanced metric.

To construct such a deformation, we first choose $\Phi^1_1=\Phi^2_2=0$ in \eqref{def-space-JD}.
Now, the integrability condition \eqref{integrability-JD}
is equivalent to $\Phi^2_1(\Phi^1_2 -s^2) - \Phi^1_2 =0$, where $\Phi^1_2, \Phi^2_1\in \mathbb{C}$.
We consider the open disc $\Delta=\Delta(0,s^2)$ around $0 \in \mathbb{C}$ and of radius $s^2$,
and we take $\Phi^1_2 \in \Delta$.

From now on, we denote $\Phi^1_2=\mathbf{t}$. Since $\mathbf{t}\in\Delta(0,s^2)\subset \mathbb{C}$,
we have $\mathbf{t}-s^2\not=0$ and the integrability condition \eqref{integrability-JD}
reduces to $\Phi^2_1=\mathbf{t}/(\mathbf{t}-s^2)$.
So, the system \eqref{def-space-JD} is written as follows:
\begin{equation}\label{def-space-JD-choose}
\omega_{\mathbf{t}}^1 = \omega^1 + \mathbf{t}\, \omega^{\bar{2}},\quad\
\omega_{\mathbf{t}}^2 = \omega^2 + \frac{\mathbf{t}}{\mathbf{t}-s^2}\, \omega^{\bar{1}},\quad\
\omega_{\mathbf{t}}^3 = \omega^3, \quad\quad \mbox{ for } \mathbf{t}\in\Delta(0,s^2).
\end{equation}
Hence, we have, for each $\mathbf{t}\in\Delta(0,s^2)$,
an invariant complex structure $J_{\mathbf{t}}$ defined
by this basis of bidegree (1,0) on the real nilmanifold $M$.
We denote by $X_{\mathbf{t}}$ the compact complex manifold~$(M,J_{\mathbf{t}})$.

Notice that since the Dolbeault cohomology group $H^{0,1}(X_0) = \langle [ \omega^{\bar{1}}],[ \omega^{\bar{2}}] \rangle$,
one can consider $X_{\mathbf{t}}$ as a small deformation of the compact complex manifold $X_0$ given by
$$
\frac{\mathbf{t}}{\mathbf{t}-s^2}\,\frac{\partial}{\partial z_2} \otimes  \omega^{\bar{1}}
+ \mathbf{t}\,\frac{\partial}{\partial z_1} \otimes \omega^{\bar{2}} \in H^{0,\,1}(X_0,\,T^{1,0}X_0).
$$

Now, using \eqref{ecus-JD}, the complex structure equations of $X_{\mathbf{t}}$ are
$$
d\omega_{\mathbf{t}}^1=d\omega_{\mathbf{t}}^2=0,\quad\
d\omega_{\mathbf{t}}^3 =
a(\mathbf{t})\, \omega_{\mathbf{t}}^{12}+b(\mathbf{t})\, \omega_{\mathbf{t}}^{1\bar{1}}+c(\mathbf{t})\, \omega_{\mathbf{t}}^{2\bar{2}} ,
$$
where
$$
a(\mathbf{t})=(\mathbf{t}-s^2) \frac{\bar{\mathbf{t}}(1-\bar{\mathbf{t}})-s^2}{|\mathbf{t}-|\mathbf{t}|^2-s^2|^2}, \quad
b(\mathbf{t})=-s^2 \frac{\bar{\mathbf{t}}(1-\mathbf{t})-s^2}{|\mathbf{t}-|\mathbf{t}|^2-s^2|^2}, \quad
c(\mathbf{t})=|\mathbf{t}-s^2|^2 \frac{\mathbf{t}(1-\bar{\mathbf{t}})-s^2}{|\mathbf{t}-|\mathbf{t}|^2-s^2|^2}.
$$
Notice that $a(0)=b(0)=1$ and $c(0)=-s^2$, so the coefficients are non-zero for
sufficiently small $\mathbf{t}$. Furthermore, a direct calculation shows that $\mathbf{t} \not= |\mathbf{t}|^2+s^2$
and $a(\mathbf{t})$, $b(\mathbf{t})$, $c(\mathbf{t})$ do not vanish for any $\mathbf{t} \in \Delta(0,s^2)$.

Let us consider the new basis $\{\eta_{\mathbf{t}}^1, \eta_{\mathbf{t}}^2, \eta_{\mathbf{t}}^3\}$
of bidegree (1,0) for $X_{\mathbf{t}}$ defined by
$$
\eta_{\mathbf{t}}^1=a(\mathbf{t})\,\omega_{\mathbf{t}}^1, \quad
\eta_{\mathbf{t}}^2=\frac{|a(\mathbf{t})|^2}{b(\mathbf{t})}\,\omega_{\mathbf{t}}^2, \quad
\eta_{\mathbf{t}}^3=\frac{|a(\mathbf{t})|^2}{b(\mathbf{t})}\,\omega_{\mathbf{t}}^3.
$$
This basis satisfies
$$
d\eta_{\mathbf{t}}^1=d\eta_{\mathbf{t}}^2=0, \quad\
d\eta_{\mathbf{t}}^3 = \eta_{\mathbf{t}}^{12} + \eta_{\mathbf{t}}^{1\bar{1}}
+D(\mathbf{t})\, \eta_{\mathbf{t}}^{2\bar{2}} ,
$$
where
$$
D(\mathbf{t})=\frac{\overline{b(\mathbf{t})}\, c(\mathbf{t})}{|a(\mathbf{t})|^2}=
-s^2\, \frac{(\mathbf{t}-|\mathbf{t}|^2-s^2)^2}{|\mathbf{t}-\mathbf{t}^2-s^2|^2}.
$$

Since every $X_\mathbf{t}$ is a nilmanifold endowed with an invariant complex structure,
by symmetrization \cite{FG}, the compact complex manifold $X_\mathbf{t}$
has a balanced metric if and only if it has an invariant one.
Now, by \cite[Proposition 2.3]{UV2}
any invariant Hermitian metric $F$ on $X_\mathbf{t}$ can be written as
$$
2F=i(\eta_\mathbf{t}^{1\bar1}+p^2\,\eta_\mathbf{t}^{2\bar2}+q^2\,\eta_\mathbf{t}^{3\bar3})
+ u\,\eta_\mathbf{t}^{1\bar2} - \bar u\,\eta_\mathbf{t}^{2\bar1},
$$
for some $p,q \in \mathbb{R}^*$, $u \in \mathbb{C}$ with $p^2>|u|^2$.
Moreover, the metric $F$ is balanced if and only if
$$
p^2+ D(\mathbf{t}) =0.
$$
Next we prove that the previous condition does not hold for any $\mathbf{t}\in \Delta(0,s^2)\!-\!\{\Imag \mathbf{t}=0\}$.
Indeed, a direct calculation shows that $D(\mathbf{t})$ is real if and only if
\begin{equation}\label{condis}
t_2\, (t_1^2+t_2^2-t_1+s^2) =0,
\end{equation}
where $t_1=\Real(\mathbf{t})$ and $t_2=\Imag(\mathbf{t})$, i.e. $\mathbf{t}=t_1+i\, t_2$.
It is easy to see that $t_1^2+t_2^2-t_1+s^2 \not=0$ for any $\mathbf{t}\in \Delta(0,s^2)$, so
the condition \eqref{condis} is satisfied if and only if $t_2=0$.
Therefore, for any $\mathbf{t}\in \Delta(0,s^2)\!-\!\{t_2=0\}$ we have that $\Imag D(\mathbf{t})\not=0$
and the balanced condition is not satisfied.
\end{proof}

\section{The Strominger property with positive $\alpha'$ is not closed}\label{sec-closed}

\noindent
Our aim in this section is to prove that the Strominger property with positive slope parameter is not closed
under holomorphic deformations.

We will focus on the complex geometry of the nilmanifold corresponding to the nilpotent Lie algebra
$\frh_4=(0,0,0,0,12,14+23)$ in the notation of \cite{Sal}.
This 6-dimensional compact manifold, which we will denote by $N$, has many
invariant complex structures $J$.
By \cite{Sal} the canonical bundle of any $(N,J)$ is holomorphically trivial.

Let $J_0$ be the invariant complex structure on $N$ defined by a (1,0)-basis $\{\eta^1,\eta^2,\eta^3\}$ satisfying
the equations
\begin{equation}\label{ecus}
d\eta^1=d\eta^2=0,\quad d\eta^3=\frac{i}{2} \eta^{1\bar{1}} +\frac12 \eta^{1\bar{2}} +\frac12 \eta^{2\bar{1}}.
\end{equation}
Maclaughlin, Pedersen, Poon and Salamon studied in \cite[Example 8]{MPPS} the deformation parameter space of $J_0$.
Any invariant complex structure sufficiently near to $J_0$ has a basis of (1,0)-forms $\{\eta_\Phi^1,\eta_\Phi^2,\eta_\Phi^3\}$
that can be written as
\begin{equation}\label{def-space}
\left\{
\begin{array}{lcl}
\eta_\Phi^1 \zzz & = &\zzz \eta^1 + \Phi^1_1\, \eta^{\bar{1}} + \Phi^1_2\, \eta^{\bar{2}},\\[4pt]
\eta_\Phi^2 \zzz & = &\zzz \eta^2 +\Phi^2_1\, \eta^{\bar{1}} + \Phi^2_2\, \eta^{\bar{2}},\\[4pt]
\eta_\Phi^3 \zzz & = &\zzz \eta^3 + \Phi^3_3\, \eta^{\bar{3}},
\end{array}
\right.
\end{equation}
where the coefficients $\Phi^1_1,\Phi^1_2, \Phi^2_1, \Phi^2_2, \Phi^3_3\in \mathbb{C}$ are sufficiently small,
and they satisfy the integrability condition.
Notice that the latter is equivalent to the closedness of the (3,0)-form
$\eta_\Phi^{123}$, and using \eqref{ecus} it is given by
\begin{equation}\label{integrability}
i(1+\Phi^3_3)\Phi^1_2=(1-\Phi^3_3)(\Phi^1_1-\Phi^2_2).
\end{equation}

We will consider a holomorphic family constructed in \cite{COUV} to answer a question in \cite{Pop2}
about the closedness of the balanced property.
Let us take $\Phi^1_1=\mathbf{t}$, $\Phi^1_2=-i\mathbf{t}$ and $\Phi^2_1=\Phi^2_2=\Phi^3_3=0$ in
the parameter space~\eqref{def-space}, where $\mathbf{t}\in \Delta=\Delta(0,1)=\{ \mathbf{t}\in \mathbb{C}\mid |\mathbf{t}|<1 \}$.
That is to say, for each $\mathbf{t}\in \Delta$, we consider the basis $\{\eta_\mathbf{t}^1,\eta_\mathbf{t}^2,\eta_\mathbf{t}^3\}$
of (1,0)-forms given by
\begin{equation}\label{def-space-particular}
\eta_\mathbf{t}^1=\eta^1 +\mathbf{t} \eta^{\bar{1}} -i \mathbf{t} \eta^{\bar{2}},\quad \eta_\mathbf{t}^2=\eta^2,\quad \eta_\mathbf{t}^3=\eta^3.
\end{equation}
This basis defines an invariant complex structure $J_\mathbf{t}$ on the nilmanifold $N$,
and we denote by $X_\mathbf{t}$ the corresponding compact complex manifold $(N,J_\mathbf{t})$.

In \cite[Theorem 5.9]{COUV} it is proved that a compact complex manifold in the holomorphic family $(X_\mathbf{t})_{\mathbf{t}\in \Delta}$
has a balanced metric if and only if $\mathbf{t} \in \Delta-\{0\}$.
Thus, there does not exist any balanced metric on the central limit $X_0=(N,J_0)$, and the
balanced property is not closed by holomorphic deformations.

In the following result we prove that any compact complex manifold $X_\mathbf{t}$, $\mathbf{t} \not=0$, admits in addition a
solution to the Strominger system with positive slope parameter $\alpha'$.

\begin{thrm}\label{no-closed}
There exists a holomorphic family of compact complex manifolds $(X_\mathbf{t})_{\mathbf{t}\in \Delta}$,
where $\Delta=\{ \mathbf{t}\in \mathbb{C}\mid |\mathbf{t}|< 1 \}$, such that $X_\mathbf{t}$ has holomorphically trivial canonical bundle for any $\mathbf{t}\in\Delta$,
and
\begin{enumerate}
\item[\rm (i)] $X_\mathbf{t}$ admits a solution to the Strominger system with $\alpha'>0$ for every $\mathbf{t}\in \Delta\!-\!\{0\}$, but
\item[\rm (ii)] $X_0$ does not admit any balanced Hermitian metric.
\end{enumerate}
In particular, the Strominger property with positive slope parameter is not closed under holomorphic deformations.
\end{thrm}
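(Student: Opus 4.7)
Part (ii) is already established by \cite[Theorem 5.9]{COUV}, so the task reduces to proving (i): that each $X_\mathbf{t}$ with $\mathbf{t}\in\Delta\setminus\{0\}$ admits a solution of the Strominger system with $\alpha'>0$. The plan is to adapt the construction of Proposition~\ref{X0-Strominger} to every member of the punctured family. For each such $\mathbf{t}$, I will exhibit an invariant balanced metric $F_\mathbf{t}$ on $X_\mathbf{t}$ together with a nowhere-vanishing holomorphic $(3,0)$-form $\Psi_\mathbf{t}$, compute the Strominger--Bismut torsion $T_\mathbf{t}=J_\mathbf{t} dF_\mathbf{t}$ and its differential $dT_\mathbf{t}$, compute the curvature $\Omega_\mathbf{t}$ of the Bismut connection $\nabla^+$ in a real frame adapted to the induced SU(3)-structure, and verify that the anomaly cancellation equation with trivial instanton $A=0$ is solved by a positive constant $\alpha'=\alpha'(\mathbf{t})$.

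The first step is to rewrite the structure equations \eqref{ecus} of $X_0$ in the deformed coframe $\{\eta_\mathbf{t}^1,\eta_\mathbf{t}^2,\eta_\mathbf{t}^3\}$ of \eqref{def-space-particular}. Since $\eta^1,\eta^2$ and their conjugates are closed, one has $d\eta_\mathbf{t}^1=d\eta_\mathbf{t}^2=0$; inverting the $\mathbf{t}$-dependent change of basis and substituting into \eqref{ecus} expresses $d\eta_\mathbf{t}^3$ as a combination of $\eta_\mathbf{t}^{12}$ and of the $(1,1)$-forms $\eta_\mathbf{t}^{j\bar k}$, with coefficients rational in $\mathbf{t}$ and $\bar{\mathbf{t}}$. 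Integrability of $J_\mathbf{t}$ (a consequence of \eqref{integrability}) ensures that no $(0,2)$-terms occur, and $\Psi_\mathbf{t}=\eta_\mathbf{t}^{123}$ is closed, yielding the required holomorphic $(3,0)$-form. A convenient diagonal rescaling of the coframe will put these equations into a normal form analogous to the one used in Theorem~\ref{Strominger-no-open}, which simplifies the subsequent balanced and curvature computations.

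Using the description of invariant Hermitian metrics from \cite[Proposition 2.3]{UV2}, one then parameterizes invariant Hermitian forms $F_\mathbf{t}$ on $X_\mathbf{t}$ and solves the balanced condition $dF_\mathbf{t}^{\,2}=0$; by \cite[Theorem~5.9]{COUV} this system is consistent for every $\mathbf{t}\neq 0$, and I will pick an explicit solution $F_\mathbf{t}$ depending smoothly on $\mathbf{t}$. A direct calculation should then give $dT_\mathbf{t}=\lambda(\mathbf{t})\,\eta_\mathbf{t}^{1\bar{1}2\bar{2}}$ for a real-valued function $\lambda$, since the limited form of $d\eta_\mathbf{t}^3$ forces the other invariant $(2,2)$-components to vanish. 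Passing to a real orthonormal basis adapted to $(F_\mathbf{t},\Psi_\mathbf{t})$ exactly as in Proposition~\ref{X0-Strominger}, I compute the Bismut connection $1$-forms and the curvature $2$-forms $\Omega^i_j$ of $\nabla^+$, and obtain $\mathrm{tr}\,\Omega_\mathbf{t}\wedge\Omega_\mathbf{t}=\mu(\mathbf{t})\,\eta_\mathbf{t}^{1\bar{1}2\bar{2}}$. With $A=0$ the anomaly cancellation equation reduces to the scalar identity $\lambda(\mathbf{t})=\tfrac{\alpha'}{4}\mu(\mathbf{t})$, which determines $\alpha'(\mathbf{t})=4\lambda(\mathbf{t})/\mu(\mathbf{t})$.

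The principal obstacle is to ensure $\alpha'(\mathbf{t})>0$ for every $\mathbf{t}\in\Delta\setminus\{0\}$; this requires nonvanishing of $\mu(\mathbf{t})$ and matching signs of $\lambda(\mathbf{t})$ and $\mu(\mathbf{t})$ throughout the punctured disc. I expect $\lambda$ and $\mu$ to emerge as explicit rational expressions in $\mathbf{t}$ and $\bar{\mathbf{t}}$ in which a common denominator and a manifestly positive factor make the positivity visible by inspection; otherwise, a continuity argument starting at small positive real $\mathbf{t}$ (where the computation is close to the model of Proposition~\ref{X0-Strominger}), together with a check that $\mu$ has no zero in $\Delta\setminus\{0\}$, will settle the sign. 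Should the trivial instanton $A=0$ fail to yield a positive $\alpha'$ at some $\mathbf{t}$, a non-flat instanton in the spirit of Remark~\ref{remark-Strominger-no-open} can be inserted to adjust the right-hand side of \eqref{anomaly-canc} while preserving positivity; this contingency is the most delicate element of the argument.
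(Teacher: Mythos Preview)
Your plan is correct and follows essentially the same route as the paper: choose the explicit diagonal balanced metric $F_{\mathbf{t},r}=\tfrac{i}{2}(\eta_\mathbf{t}^{1\bar1}+|\mathbf{t}|^2\eta_\mathbf{t}^{2\bar2}+r^2\eta_\mathbf{t}^{3\bar3})$, pass to a real orthonormal coframe, compute $dT_{\mathbf{t},r}$ and the Bismut curvature forms, and solve the anomaly equation with $A=0$. The paper's explicit calculation gives $\lambda(\mathbf{t},r)=\tfrac{r^2(1+3|\mathbf{t}|^2)}{2(1-|\mathbf{t}|^2)^2}$ and $\mu(\mathbf{t},r)=\tfrac{r^4(1+|\mathbf{t}|^2+2|\mathbf{t}|^4)}{|\mathbf{t}|^2(1-|\mathbf{t}|^2)^4}$, both manifestly positive for $0<|\mathbf{t}|<1$, so your hedges (continuity arguments, fallback to a non-flat instanton) turn out to be unnecessary; the paper also does not need the preliminary diagonal rescaling to a normal form that you suggest borrowing from Theorem~\ref{Strominger-no-open}.
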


\begin{proof}
A direct calculation using \eqref{ecus} shows that the complex structure equations for $X_\mathbf{t}$,
with respect to the basis \eqref{def-space-particular}, are
\begin{equation}\label{ecus-disc-a}
d\eta_\mathbf{t}^1=d\eta_\mathbf{t}^2=0,\quad
2(1-|\mathbf{t}|^2)\, d\eta_\mathbf{t}^3=2\bar{\mathbf{t}} \eta_\mathbf{t}^{12}+i \eta_\mathbf{t}^{1\bar{1}} +\eta_\mathbf{t}^{1\bar{2}}
+ \eta_\mathbf{t}^{2\bar{1}} -i|\mathbf{t}|^2 \eta_\mathbf{t}^{2\bar{2}},
\end{equation}
for each $\mathbf{t}\in\Delta$.

By \cite[Theorem 5.9]{COUV}, the compact complex manifold $X_0$ does not admit any balanced metric.
For each $\mathbf{t}\in \Delta- \{0\}$, we consider the Hermitian metric $F_{\mathbf{t},r}$ on $X_\mathbf{t}$ given by
\begin{equation}\label{metric-disc-a}
F_{\mathbf{t},r} = \frac{i}{2}\, ( \eta_\mathbf{t}^{1\bar{1}} + |\mathbf{t}|^2\, \eta_\mathbf{t}^{2\bar{2}} + r^2\, \eta_\mathbf{t}^{3\bar{3}}),
\end{equation}
where $r \in \mathbb{R}^*$. Now,
\begin{eqnarray*}
-4 \, d F_{\mathbf{t},r} \wedge F_{\mathbf{t},r} \!\!\!&=&\!\!\!
r^2\, d \eta_\mathbf{t}^{3\bar{3}} \wedge (\eta_\mathbf{t}^{1\bar{1}} + |\mathbf{t}|^2\, \eta_\mathbf{t}^{2\bar{2}} + r^2\, \eta_\mathbf{t}^{3\bar{3}})\\
  \!\!\!&=&\!\!\!r^2\, (d \eta_\mathbf{t}^{3}\wedge \eta_\mathbf{t}^{\bar{3}} - \eta_\mathbf{t}^3\wedge d \eta_\mathbf{t}^{\bar{3}}) \wedge (\eta_\mathbf{t}^{1\bar{1}} + |\mathbf{t}|^2\, \eta_\mathbf{t}^{2\bar{2}})\\
  \!\!\!&=&\!\!\!0,
\end{eqnarray*}
because $d \eta_\mathbf{t}^{3}\wedge (\eta_\mathbf{t}^{1\bar{1}} + |\mathbf{t}|^2\, \eta_\mathbf{t}^{2\bar{2}})=0$ by \eqref{ecus-disc-a}.
Hence, the metric $F_{\mathbf{t},r}$ is a balanced metric on $X_{\mathbf{t}}$ for any
$0<|\mathbf{t}|<1$ and $r\in \mathbb{R}^*$.

For each $\mathbf{t}\in \Delta- \{0\}$, we consider the real basis of 1-forms
$\{e^1,\ldots,e^6\}$ defined as
$$
e^1= \Real \eta_\mathbf{t}^1, \ \   e^{2} = \Imag \eta_\mathbf{t}^1, \ \  e^3= \Real (|\mathbf{t}|\,\eta_\mathbf{t}^2), \ \
e^{4} = \Imag (|\mathbf{t}|\,\eta_\mathbf{t}^2), \ \  e^5= \Real (r\, \eta_\mathbf{t}^3), \ \   e^{6} = \Imag (r\, \eta_\mathbf{t}^3),
$$
that is,
$$
e^1+i\, e^2 = \eta_\mathbf{t}^1,\quad
e^3+i\, e^4 = |\mathbf{t}|\,\eta_\mathbf{t}^2,\quad
e^5+i\,e^6 = r\, \eta_\mathbf{t}^3.
$$
Hence, the complex structure $J_\mathbf{t}$ and the SU(3)-structure
$(F_{\mathbf{t},r},\Psi_{\mathbf{t},r}= r\,|\mathbf{t}|\,\eta_\mathbf{t}^{123})$ express
in this basis as
\begin{equation}\label{adapted-basis-bis}
\begin{array}{lcl}
\zzz & &\zzz J_\mathbf{t} e^1=-e^2,\ J_\mathbf{t} e^3=-e^4,\ J_\mathbf{t} e^5=-e^6,\\[4pt]
\zzz & &\zzz F_{\mathbf{t},r}=e^{12}+e^{34}+e^{56}, \\[4pt]
\zzz & &\zzz \Psi_{\mathbf{t},r}=(e^{1}+i\,e^{2})\wedge(e^{3}+i\,e^{4})\wedge(e^{5}+i\,e^{6}).
\end{array}
\end{equation}

Moreover, it follows from \eqref{ecus-disc-a} that the structure equations
in the adapted basis $\{e^k\}_{k=1}^6$ are
\begin{equation}\label{ecus-reales-disc-a}
\left\{
  \begin{aligned}
  &d e^1= d e^2= d e^3= d e^4= 0, \\
  &d e^5= \frac{r}{1-|\mathbf{t}|^2}(e^{12}-e^{34})
  +\frac{r\, t_1}{|\mathbf{t}|(1-|\mathbf{t}|^2)}(e^{13}-e^{24})
  +\frac{r\, t_2}{|\mathbf{t}|(1-|\mathbf{t}|^2)}(e^{14}+e^{23}) ,\\
  &d e^6= -\frac{r\, t_2}{|\mathbf{t}|(1-|\mathbf{t}|^2)}(e^{13}-e^{24})
  -\frac{r\,(1-t_1)}{|\mathbf{t}|(1-|\mathbf{t}|^2)} e^{14}
  +\frac{r\,(1+t_1)}{|\mathbf{t}|(1-|\mathbf{t}|^2)}e^{23},
  \end{aligned}
\right.
\end{equation}
where $t_1=\Real(\mathbf{t})$ and $t_2=\Imag(\mathbf{t})$, i.e. $\mathbf{t}=t_1+i\, t_2$.

Using \eqref{ecus-reales-disc-a}, we get
that the torsion 3-form $T_{\mathbf{t},r}=J_\mathbf{t} dF_{\mathbf{t},r}$ is given by
$$
\begin{aligned}
  & \frac{|\mathbf{t}|(1-|\mathbf{t}|^2)}{r}\, T_{\mathbf{t},r}= |\mathbf{t}|\, e^{125}-t_1\, e^{135}+t_2\, e^{136}-t_2\, e^{145}
-(1+t_1)\, e^{146} \\[5pt]
&\phantom{\frac{|\mathbf{t}|(1-|\mathbf{t}|^2)}{r}\, T_{\mathbf{t},r}=}
-t_2\, e^{235}+(1-t_1) e^{236}+t_1\, e^{245}-t_2\, e^{246}- |\mathbf{t}| e^{345} .
\end{aligned}
$$

Again from \eqref{ecus-reales-disc-a} we get
$$
dT_{\mathbf{t},r}= -\frac{2\, r^2(1+3|\mathbf{t}|^2)}{|\mathbf{t}|^2(1-|\mathbf{t}|^2)^2}\, e^{1234}
= \frac{ r^2(1+3|\mathbf{t}|^2)}{2 (1-|\mathbf{t}|^2)^2}\, \eta_\mathbf{t}^{1\bar{1}2\bar{2}}.
$$
In the last equality we have used the relation $e^{1234}=-\frac{|\mathbf{t}|^2}{4} \, \eta_\mathbf{t}^{1\bar{1}2\bar{2}}$.

A long but direct calculation shows that, in the adapted basis $\{e^k\}_{k=1}^6$, the curvature 2-forms of the Strominger-Bismut connection,
which for simplicity
we will denote by $\Omega^i_j$ instead of $(\Omega_{\mathbf{t},r})^i_j$, are:
\begin{eqnarray*}
\rho({\mathbf{t},r})\, \Omega^1_2 \!\!\!&=&\!\!\!
-|\mathbf{t}|^2\, e^{12} - t_1 |\mathbf{t}|\, (e^{13} - e^{24}) - t_2 |\mathbf{t}|\, (e^{14} + e^{23})  + 3 |\mathbf{t}|^2\, e^{34},\\[3pt]
\rho({\mathbf{t},r})\,  \Omega^1_3 \!\!\!&=&\!\!\!   - |\mathbf{t}|^2\, (e^{13} + e^{24}) +2 |\mathbf{t}|\, e^{56},\\[3pt]
\rho({\mathbf{t},r})\, \Omega^1_4 \!\!\!&=&\!\!\!
- t_2\, (e^{13} - e^{24}) -(1-t_1+|\mathbf{t}|^2) e^{14} +(1+t_1+|\mathbf{t}|^2) e^{23},\\[3pt]
\rho({\mathbf{t},r})\, \Omega^1_5 \!\!\!&=&\!\!\!
- t_2(e^{16}+|\mathbf{t}|\, e^{35}) +t_1(e^{26}+|\mathbf{t}|\, e^{45}),\\[3pt]
\rho({\mathbf{t},r})\, \Omega^1_6 \!\!\!&=&\!\!\!
- t_1(e^{16}+|\mathbf{t}|\, e^{35}) -t_2(e^{26}+|\mathbf{t}|\, e^{45}),\\[3pt]
\rho({\mathbf{t},r})\, \Omega^3_4 \!\!\!&=&\!\!\!
3|\mathbf{t}|^2\, e^{12} + t_1 |\mathbf{t}|\, (e^{13} - e^{24}) + t_2 |\mathbf{t}|\, (e^{14} + e^{23})  - |\mathbf{t}|^2\, e^{34},\\[3pt]
\rho({\mathbf{t},r})\, \Omega^3_5 \!\!\!&=&\!\!\!
- t_2(|\mathbf{t}|\, e^{15}-e^{36}) +t_1(|\mathbf{t}|\, e^{25}-e^{46}),\\[3pt]
\rho({\mathbf{t},r})\, \Omega^3_6 \!\!\!&=&\!\!\!
- t_1(|\mathbf{t}|\, e^{15}-e^{36}) -t_2(|\mathbf{t}|\, e^{25}-e^{46}),
\end{eqnarray*}
where $\rho({\mathbf{t},r})=\frac{|\mathbf{t}|^2 (1-|\mathbf{t}|^2)^2}{r^2}$.
The other curvature 2-forms are given by the relations:
$\Omega^2_3=-\Omega^1_4$, $\Omega^2_4=\Omega^1_3$, $\Omega^2_5=-\Omega^1_6$, $\Omega^2_6=\Omega^1_5$,
$\Omega^4_5=-\Omega^3_6$, $\Omega^4_6=\Omega^3_5$, and $\Omega^5_6 = -\Omega^1_2-\Omega^3_4$.

\smallskip

Now, we have
$${\rm tr}\,\Omega_{\mathbf{t},r}\wedge\Omega_{\mathbf{t},r}=\sum_{1\leq i<j\leq 6} \Omega^i_j\wedge\Omega^i_j =
-4 \, r^4\, \frac{\, 1+|\mathbf{t}|^2+2|\mathbf{t}|^4 \,}{|\mathbf{t}|^4 (1-|\mathbf{t}|^2)^4} \ e^{1234}
=
 r^4\, \frac{\, 1+|\mathbf{t}|^2+2|\mathbf{t}|^4 \,}{|\mathbf{t}|^2 (1-|\mathbf{t}|^2)^4} \, \eta_\mathbf{t}^{1\bar{1}2\bar{2}}.
$$

Taking the trivial instanton $A=0$ (see Remark~\ref{remark-Strominger-no-closed} for solutions with non-flat instanton),
we conclude that for each $\mathbf{t}$ such that $0<|\mathbf{t}|<1$, there exists a solution to the anomaly cancellation equation
with positive $\alpha'$.
Indeed, from the equality
$$
\frac{r^2(1+3|\mathbf{t}|^2)}{2(1-|\mathbf{t}|^2)^2}\, \eta_\mathbf{t}^{1\bar{1}2\bar{2}}
=dT_{\mathbf{t},r}
= \frac{\alpha'}{4} ({\rm tr}\ \Omega_{\mathbf{t},r}\wedge\Omega_{\mathbf{t},r} - {\rm tr}\ \Omega^A\wedge\Omega^A)
=\alpha'\, \frac{r^4(1+|\mathbf{t}|^2+2|\mathbf{t}|^4)}{4|\mathbf{t}|^2(1-|\mathbf{t}|^2)^4}\, \eta_\mathbf{t}^{1\bar{1}2\bar{2}},
$$
we are led to the following positive value for the slope parameter:
$$
\alpha'= \frac{2\, |\mathbf{t}|^2(1+3|\mathbf{t}|^2)(1-|\mathbf{t}|^2)^2}{r^2(1+|\mathbf{t}|^2+2|\mathbf{t}|^4)} >0.
$$
\end{proof}

\section{The negative slope parameter case}\label{sec-negative}

\noindent
In this section we focus on the Strominger property negative slope parameter $\alpha'$.

The first solutions of this kind were obtained
in \cite{CCDLMZ} on the Iwasawa manifold by using a certain abelian instanton $A$
(see the proof of Theorem~\ref{negative-no-open-no-closed} below).
In \cite{FIUV09} it is proved that there is no invariant solution
to the Strominger system with $\alpha'>0$ with respect to the Chern, Strominger-Bismut, Levi-Civita or the $\nabla^-=\nabla^g-\frac12H$ connections in the
anomaly cancellation equation.
However, to our knowledge it is an open question if the Iwasawa manifold can admit or not any solution to the Strominger system with
$\alpha'>0$.

Explicit solutions with $\alpha'<0$ on nilmanifolds, solvmanifolds and on compact quotient
of SL(2,$\mathbb{C}$) are constructed in \cite{Fei-Yau,FIUV09,FIUV2014,OUV1,UV2}, both with trivial or non-flat instanton,
and with respect to a family of connections in the anomaly cancellation equation.
On the other hand, in~\cite{Fu-Yau-alpha-negative} Fu and Yau obtained solutions to the
Strominger system with negative $\alpha'$ on a class of complex 3-dimensional manifolds
constructed by Goldstein and Prokushkin \cite{GP}.
More recently, Phong,  Picard and Zhang solved in \cite{PPZ}
the Fu-Yau equation with negative slope parameter in arbitrary dimensions
and studied its relation to a certain modification of the Strominger system.

An interesting question is what are the
differences and similarities of the geometric properties of the positive and the negative slope parameter cases.
Notice that a non-stability result similar to
Theorem~\ref{Strominger-no-open} in the case of negative $\alpha'$
follows directly from \cite{AB} and \cite{CCDLMZ}.
Indeed, by \cite{CCDLMZ} the Iwasawa manifold has a solution to the Strominger system with $\alpha'<0$, and by \cite{AB}
there are small holomorphic deformations of the Iwasawa manifold not admitting any balanced metric.
However, we do not know of any result about closedness of the Strominger property for negative $\alpha'$.
Next we prove a similar result to Theorem~\ref{no-closed}.

\begin{thrm}\label{negative-no-open-no-closed}
The Strominger property with negative slope parameter is neither open nor closed under holomorphic deformations.
\end{thrm}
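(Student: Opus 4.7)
The non-openness half of the theorem is an immediate consequence of combining two known facts, as already remarked in the paragraph preceding the statement: by \cite{CCDLMZ} the Iwasawa manifold carries a Strominger solution with $\alpha'<0$, while by \cite{AB} there are arbitrarily small holomorphic deformations of the Iwasawa manifold admitting no balanced metric, and hence no Strominger solution at all. No further work is needed for that half.

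For the non-closedness half, my plan is to reuse the holomorphic family $(X_\mathbf{t})_{\mathbf{t}\in\Delta}$ of Section~\ref{sec-closed}, together with the balanced metric $F_{\mathbf{t},r}$ and the Strominger--Bismut curvature $\Omega_{\mathbf{t},r}$ already worked out in the proof of Theorem~\ref{no-closed}, but to replace the trivial instanton $A=0$ by a non-flat abelian instanton in the spirit of \cite{CCDLMZ}. Concretely, I would look for an invariant real $(1,1)$-form
\[
\omega_A=a\,\eta_\mathbf{t}^{1\bar{1}}+b\,\eta_\mathbf{t}^{2\bar{2}}+c\,\eta_\mathbf{t}^{1\bar{2}}-\bar c\,\eta_\mathbf{t}^{2\bar{1}},\qquad a,b\in\mathbb{R},\ c\in\mathbb{C},
\]
to serve (up to a factor of $\pm i$) as the curvature of a unitary connection on a trivial Hermitian line bundle over $X_\mathbf{t}$. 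Since $\eta_\mathbf{t}^1,\eta_\mathbf{t}^2$ are closed and $\omega_A$ is of pure type $(1,1)$ by construction, the instanton equations reduce to the single linear primitivity constraint $\omega_A\wedge F_{\mathbf{t},r}^2=0$, which determines $b$ in terms of $a$ and leaves a two-parameter freedom in the choice of $\omega_A$.

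A direct bookkeeping then shows that both ${\rm tr}\,\Omega^A\wedge\Omega^A$ and ${\rm tr}\,\Omega_{\mathbf{t},r}\wedge\Omega_{\mathbf{t},r}$ are real multiples of $\eta_\mathbf{t}^{1\bar{1}2\bar{2}}$, so by scaling $\omega_A$ one can arrange the right-hand side of the anomaly cancellation equation to have sign opposite to $dT_{\mathbf{t},r}$, which the proof of Theorem~\ref{no-closed} already shows to be a positive multiple of $\eta_\mathbf{t}^{1\bar{1}2\bar{2}}$. This produces a Strominger solution with $\alpha'<0$ on every $X_\mathbf{t}$ with $\mathbf{t}\in\Delta-\{0\}$. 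Since Theorem~\ref{no-closed}(ii) rules out any balanced metric on $X_0$, and hence any Strominger solution, non-closedness for $\alpha'<0$ follows at once.

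The main technical step is verifying that the primitivity hyperplane in the $(a,b,c)$-parameter space meets the region where $\omega_A\wedge\omega_A$ has sufficient magnitude and correct sign to overpower ${\rm tr}\,\Omega_{\mathbf{t},r}\wedge\Omega_{\mathbf{t},r}$, uniformly for $\mathbf{t}\in\Delta-\{0\}$. The simplest choice $c=0$, $b=-|\mathbf{t}|^2 a$ already makes $\omega_A\wedge\omega_A$ a negative multiple of $\eta_\mathbf{t}^{1\bar{1}2\bar{2}}$ whose magnitude grows like $a^2|\mathbf{t}|^2$, so the correct sign of $\alpha'$ can always be achieved by taking $a$ large enough for each fixed $\mathbf{t}\neq 0$; this parallels the abelian construction of \cite{CCDLMZ} and should go through without surprises, though explicitly writing down the admissible scaling $a=a(\mathbf{t},r)$ is the only real computation left.
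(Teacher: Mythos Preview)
Your plan coincides with the paper's: for non-openness you invoke exactly the Iwasawa/\cite{AB} argument the authors use, and for non-closedness you reuse the family $(X_\mathbf{t})$ and the metrics $F_{\mathbf{t},r}$ of Theorem~\ref{no-closed} together with a CCDLMZ-type abelian instanton. The one substantive difference is \emph{how} you force $\alpha'<0$. You keep $r$ fixed and propose to scale the instanton curvature by a large real parameter $a$; the paper instead uses the fixed CCDLMZ instanton (with ${\rm tr}\,\Omega^A\wedge\Omega^A=\frac{|\mathbf{t}|^2}{2}\,\eta_\mathbf{t}^{1\bar12\bar2}$) and exploits the fact that ${\rm tr}\,\Omega_{\mathbf{t},r}\wedge\Omega_{\mathbf{t},r}$ scales like $r^4$ while $dT_{\mathbf{t},r}$ scales like $r^2$, so taking $r$ small (concretely $r^4<\frac{|\mathbf{t}|^4(1-|\mathbf{t}|^2)^4}{2(1+|\mathbf{t}|^2+2|\mathbf{t}|^4)}$) makes ${\rm tr}\,\Omega\wedge\Omega-{\rm tr}\,\Omega^A\wedge\Omega^A$ negative and hence $\alpha'<0$. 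Using the free continuous metric parameter is cleaner and sidesteps the issue below.

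There is a genuine gap in your execution. With $c=0$ and $b=-|\mathbf{t}|^2a$, the real form $i\omega_A$ is (up to a constant) $e^{12}-e^{34}$ in the adapted basis \eqref{adapted-basis-bis}, and a short check with \eqref{ecus-reales-disc-a} shows that no real combination of $de^5,de^6$ equals a nonzero multiple of $e^{12}-e^{34}$ when $\mathbf{t}\neq 0$; by Nomizu's theorem this form is closed but \emph{not exact} on $X_\mathbf{t}$. It therefore cannot be the curvature of any connection on the \emph{trivial} line bundle, contrary to what you assert. You would need a non-trivial holomorphic line bundle, which forces the Chern class---and with it your scaling parameter $a$---to be quantized by the lattice of $N$; that is not fatal, but you neither verify the existence of such a bundle nor address the discreteness of $a$, and it is precisely this complication that the paper's trick of shrinking $r$ avoids entirely.
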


\begin{proof}
As we noticed above, there are small deformations of the Iwasawa manifold showing that the Strominger property with $\alpha'<0$
is not stable. In the Remark~\ref{other-example-no-open-negative} below, we further study the compact complex manifolds $X^s$
constructed in Proposition~\ref{X0-Strominger}, and show that any $X^s$ also admits a solution to the Strominger system with $\alpha'<0$.
Hence, one has a family
of explicit examples of compact complex manifolds on which the property is not stable.

\smallskip

For the proof of non-closedness of the Strominger property with negative slope parameter, we will consider
the family $X_\mathbf{t}$ constructed in the proof of Theorem~\ref{no-closed}.
Next, we show that $X_\mathbf{t}$ admits a solution to the Strominger system with $\alpha'<0$ for every $0<|\mathbf{t}|<1$.

Since the compact complex manifold $X_\mathbf{t}$ has the structure of a torus bundle over a complex 2-torus, we can consider
the instanton constructed
by
Cardoso, Curio, Dall'Agata, Lust, Manousselis and Zoupanos in \cite{CCDLMZ}.
They considered an abelian field strength
configuration with (1,1)-form
$$
\mathcal{F} = if\, dz_1\wedge d\bar{z}_1 - if\, dz_2\wedge
d\bar{z}_2 + {\rm e}^{i\gamma} \sqrt{\frac14 - f^2}\, dz_1\wedge
d\bar{z}_2 - {\rm e}^{-i\gamma} \sqrt{\frac14 - f^2}\, dz_2\wedge
d\bar{z}_1,
$$
where the function $f$ satisfies
$$
i\partial_{z_2} f + \partial_{z_1}\!\!\left({\rm e}^{-i\gamma}
\sqrt{\frac14 - f^2} \right)=0, \quad\quad\  i\partial_{z_1} f +
\partial_{z_2}\!\!\left({\rm e}^{i\gamma} \sqrt{\frac14 - f^2}
\right)=0.
$$
Under these conditions one gets
\begin{equation}\label{Cardoso}
{\rm tr}\ \Omega^A\wedge\Omega^A= \mathcal{F}\wedge \mathcal{F} = -\frac12
dz_1\wedge dz_2\wedge d\bar{z}_1 \wedge d\bar{z}_2.
\end{equation}

Here $dz_1$ and $dz_2$ the (1,0)-forms at the level of the Lie group, which descend to the forms
$\eta_\mathbf{t}^1$ and $|\mathbf{t}|\eta_\mathbf{t}^2$ on the compact nilmanifold $X_\mathbf{t}=(N,J_\mathbf{t})$.
Notice that the Hermitian-Yang-Mills equation is satisfied, i.e.
$$
\Omega^A \wedge F_{\mathbf{t},r}^2=0, \quad \quad (\Omega^A)^{0,2}=(\Omega^A)^{2,0}=0.
$$
Thus, by \eqref{Cardoso} we have
${\rm tr}\, \Omega^A\wedge \Omega^A
= -\frac12\, dz_1\wedge dz_2\wedge d\bar{z}_1 \wedge d\bar{z}_2
= \frac{|\mathbf{t}|^2}{2}\, \eta_\mathbf{t}^{1\bar{1}2\bar{2}}$, and
the anomaly cancellation equation becomes
\begin{equation}\label{new-anomaly}
\begin{array}{lcl}
\displaystyle{\frac{r^2(1+3|\mathbf{t}|^2)}{2(1-|\mathbf{t}|^2)^2}\, \eta_\mathbf{t}^{1\bar{1}2\bar{2}}}
\zzz & = &\zzz \displaystyle{dT_{\mathbf{t},r}} \\[4pt]
\zzz & = &\zzz \displaystyle{\frac{\alpha'}{4} ({\rm tr}\ \Omega_{\mathbf{t},r}\wedge\Omega_{\mathbf{t},r} - {\rm tr}\ \Omega^A\wedge\Omega^A)} \\[8pt]
\zzz & = &\zzz \displaystyle{\frac{\alpha'}{4} \left( \frac{r^4(1+|\mathbf{t}|^2+2|\mathbf{t}|^4)}{|\mathbf{t}|^2(1-|\mathbf{t}|^2)^4} - \frac{|\mathbf{t}|^2}{2} \right)\, \eta_\mathbf{t}^{1\bar{1}2\bar{2}}.}
\end{array}
\end{equation}
This implies that the slope parameter is given by
$$
\alpha'=
\frac{4r^2|\mathbf{t}|^2(1+3|\mathbf{t}|^2)(1-|\mathbf{t}|^2)^2}{2\,r^4(1+|\mathbf{t}|^2+2|\mathbf{t}|^4)-|\mathbf{t}|^4(1-|\mathbf{t}|^2)^4}.
$$
Therefore, on $X_\mathbf{t}$, $0<|\mathbf{t}|<1$, we can take a balanced metric $F_{\mathbf{t},r}$ in~\eqref{metric-disc-a}
with $r\in \mathbb{R}^*$ small enough such that $r^4< \frac{|\mathbf{t}|^4(1-|\mathbf{t}|^2)^4}{2(1+|\mathbf{t}|^2+2|\mathbf{t}|^4)}$,
which ensures that $\alpha'<0$.
In conclusion, we have constructed a holomorphic family of compact complex manifolds $(X_\mathbf{t})_{\mathbf{t}\in \Delta}$,
where $\Delta=\{ \mathbf{t}\in \mathbb{C}\mid |\mathbf{t}|< 1 \}$, such that $X_\mathbf{t}$ has holomorphically trivial canonical bundle for any $\mathbf{t}\in\Delta$,
and
\begin{enumerate}
\item[\rm (i)] $X_\mathbf{t}$ admits a solution to the Strominger system with $\alpha'<0$ for every $\mathbf{t}\in \Delta\!-\!\{0\}$, but
\item[\rm (ii)] $X_0$ does not admit any balanced Hermitian metric.
\end{enumerate}
In particular, the Strominger property with negative slope parameter is not closed under holomorphic deformations.
\end{proof}

\begin{rmrk}\label{remark-Strominger-no-open}
{\rm
In the proof of Proposition~\ref{X0-Strominger} we have considered the trivial instanton $A$
in the anomaly cancellation equation. It is worth to remark that one can construct solutions with non-flat instanton
on every compact complex manifold $X^s=(M,J^s)$.
For that, we must first enlarge the space of balanced $J^s$-Hermitian metrics \eqref{metrica-Js} by
\begin{equation}\label{new-balanced-family}
F_{s,r}= \frac{i}{2} (\omega^{1\bar{1}} +s^2\, \omega^{2\bar{2}}+ r^2\,\omega^{3\bar{3}}),
\end{equation}
where $r\in \mathbb{R}^*$.
In fact, $F^2_{s,r}$ is a closed form, so $F_{s,r}$ defines a 1-parameter family of
balanced $J^s$-Hermitian metrics on $X^s$. A similar calculation as in the proof of Proposition~\ref{X0-Strominger}
shows that the torsion $T_{s,r}=J^s d F_{s,r}$ and the curvature $\Omega_{s,r}$ of the Strominger-Bismut connection satisfy
$$
dT_{s,r}= r^2\, \frac{2s^2+1}{s^4}\, \omega^{1\bar{1}2\bar{2}},\quad \quad
{\rm tr}\,\Omega_{s,r}\wedge\Omega_{s,r} = 4 r^4\, \frac{4s^2+1}{s^4}\, \omega^{1\bar{1}2\bar{2}}.
$$
Since the compact complex manifold $X^s$ has the structure of a torus bundle over a complex 2-torus, we can consider
the instanton constructed in \cite{CCDLMZ} and used in the proof of Theorem~\ref{negative-no-open-no-closed}.
In this case we consider $dz_1$ and $dz_2$ the (1,0)-forms at the level of the Lie group, which descend to the forms
$\omega^1$ and $s\omega^2$ on the compact nilmanifold $X^s=(M,J^s)$.
Thus, by \eqref{Cardoso} we have
${\rm tr}\, \Omega^A\wedge \Omega^A
= -\frac12\, dz_1\wedge dz_2\wedge d\bar{z}_1 \wedge d\bar{z}_2
= \frac{s^2}{2}\, \omega^{1\bar{1}2\bar{2}}$, and
the anomaly cancellation equation becomes
\begin{equation}\label{new-anomaly-1}
\begin{array}{lcl}
\displaystyle{r^2\,\frac{2s^2+1}{s^4}\, \omega^{1\bar{1}2\bar{2}}}
\zzz & = &\zzz \displaystyle{dT_{s,r}} \\[4pt]
\zzz & = &\zzz \displaystyle{\frac{\alpha'}{4} ({\rm tr}\ \Omega_{s,r}\wedge\Omega_{s,r} - {\rm tr}\ \Omega^A\wedge\Omega^A)} \\[8pt]
\zzz & = &\zzz \displaystyle{\frac{\alpha'}{4} \left( 4\,r^4\, \frac{4s^2+1}{s^4} - \frac{s^2}{2} \right)\, \omega^{1\bar{1}2\bar{2}}.}
\end{array}
\end{equation}
This implies that $\alpha'=\frac{8r^2(2s^2+1)}{8\,r^4(4s^2+1)-s^6}$.
Therefore, on the compact complex manifold $X^s$ we can take a balanced metric $F_{s,r}$ in~\eqref{new-balanced-family}
with $r \in \mathbb{R}$ large enough such that $r^4> \frac{s^6}{8(4s^2+1)}$, which ensures that $\alpha'>0$.
}
\end{rmrk}

\begin{rmrk}\label{other-example-no-open-negative}
{\rm
Notice that, in Remark~\ref{remark-Strominger-no-open}, taking $r$ small enough such that $0 < r^4 < \frac{s^6}{8(4s^2+1)}$,
one has on every compact complex manifold $X^s$ a solution to the Strominger system with $\alpha'<0$ in the anomaly cancellation equation.
Thus, using the holomorphic deformation of $X^s$ constructed in Theorem~\ref{Strominger-no-open},
we get a family of examples for which the Strominger property with $\alpha'<0$ is non-stable.
}
\end{rmrk}

\begin{rmrk}\label{remark-Strominger-no-closed}
{\rm
In the proof of Theorem~\ref{no-closed} we have considered on $X_\mathbf{t}$, $0<|\mathbf{t}|<1$, the trivial instanton $A$
in the anomaly cancellation equation. It is worth to remark that one can construct solutions with $\alpha'>0$ and non-flat instanton.
Indeed, following the proof of Theorem~\ref{negative-no-open-no-closed}, it is enough to take a balanced metric
$F_{\mathbf{t},r}$ in~\eqref{metric-disc-a}
with $r\in \mathbb{R}^*$ large enough such that $r^4> \frac{|\mathbf{t}|^4(1-|\mathbf{t}|^2)^4}{2(1+|\mathbf{t}|^2+2|\mathbf{t}|^4)}$,
which ensures $\alpha'>0$.
}
\end{rmrk}

\section*{Acknowledgments}
\noindent This work has been partially supported by the projects MINECO (Spain) MTM2014-58616-P,
Gobierno de Arag\'on/Fondo Social Europeo--Grupo Consolidado E15 Geometr\'{\i}a,
and by Fundaci\'on Bancaria Ibercaja--Fundaci\'on CAI--Universidad de Zaragoza, Programa de Estancias de Investigaci\'on,
Contract DFNI I02/4/12.12.2014 and  Contract 195/2016 with the
Sofia University "St.Kl.Ohridski". S.I. thanks the University of Zaragoza for the support during his visit to the Department of Mathematics, and
L.U. thanks the University of Sofia "St. Kl. Ohridski"
for the hospitality and financial support provided while visiting
the Faculty of Mathematics and Informatics.


\begin{thebibliography}{33}

\bibitem{AB} L. Alessandrini, G. Bassanelli, Small deformations of a class of compact non-K\"ahler manifolds,
\emph{Proc. Amer. Math. Soc.} {\bf 109} (1990), 1059--1062.

\bibitem{Anderson-G-S} L.B. Anderson, J. Gray, E. Sharpe,
Algebroids, heterotic moduli spaces and the Strominger system,
\emph{J. High Energy Physics} JHEP {\bf 07} (2014) 037.

\bibitem{A-Gar} B. Andreas, M. Garc\'{\i}a-Fern\'andez,
Solutions of the Strominger system via stable bundles on Calabi-Yau threefolds,
\emph{Commun. Math. Phys.} {\bf 315} (2012), 153--168.

\bibitem{AGarcia} B. Andreas, M. Garc\'{\i}a-Fern\'andez,
Note on solutions of the Strominger system from unitary representations of cocompact lattices of SL(2,$\mathbb{C}$),
\emph{Commun. Math. Phys.} {\bf 332} (2014), 1381--1383.

\bibitem{AFR} D. Angella, M.G. Franzini, F.A. Rossi, Degree of non-K\"ahlerianity for 6-dimensional nilmanifolds,
\emph{Manuscripta Math.} {\bf 148} (2015), no. 1--2, 177--211.

\bibitem{AU} D. Angella, L. Ugarte,
On small deformations of balanced manifolds,
arXiv:1502.07581v2 [math.DG].

\bibitem{BBDGE}  K. Becker, M. Becker, K. Dasgupta, P.S. Green, E. Sharpe,
Compactifications of heterotic strings on non-K\"ahler complex manifolds: II,
\emph{Nuclear Phys.} {\bf B} {\bf 678} (2004), 19--100.

\bibitem{BT} K. Becker and L.-S. Tseng, Heterotic flux compactifications and their moduli,
\emph{Nuclear Phys.} {\bf B 741} (2006) 162--179.

\bibitem{Berg} E.A. Bergshoeff, M. de Roo,
The quartic effective action of the heterotic string and supersymmetry,
\emph{Nuclear Phys.} {\bf B} {\bf 328} (1989), 439.

\bibitem{Bis} J.-M. Bismut, A local index theorem for non-K\"ahler
manifolds, \emph{Math. Ann.} {\bf 284} (1989), 681--699.

\bibitem{Bogomolov} F.A. Bogomolov,
Hamiltonian K\"ahler manifolds,
\emph{Dolk. Akad. Nauk SSSR} {\bf 243} (1978) 1101--1104.

\bibitem{CCDLMZ} G.L. Cardoso, G. Curio, G. Dall'Agata, D. Lust, P. Manousselis, G. Zoupanos,
Non-K\"aeler string back-grounds and their five torsion
classes, \emph{Nuclear Phys.} {\bf B} {\bf 652} (2003), 5--34.

\bibitem{COUV} M. Ceballos, A. Otal, L. Ugarte, R. Villacampa,
Invariant complex structures on 6-nilmanifolds: classification, Fr\"olicher spectral sequence
and special Hermitian metrics,  \emph{J. Geom. Anal.} {\bf 26} (2016), 252--286.

\bibitem{OsSv2}  X.C. de la Ossa, E. Hardy, E.E. Svanes, The Heterotic Superpotential and Moduli,
\emph{J. High Energy Physics} JHEP {\bf 01} (2016) 049.

\bibitem{Ossa-Svanes} X.C. de la Ossa, E.E. Svanes,
Holomorphic bundles and the moduli space of $N=1$ supersymmetric heterotic compactifications,
\emph{J. High Energy Physics} JHEP {\bf 10} (2014) 123.

\bibitem{OsSv1}  X.C. de la Ossa, E.E. Svanes,
Connections, Field Redefinitions and Heterotic Supergravity,
\emph{J. High Energy Physics} JHEP {\bf 12} (2014) 008.

\bibitem{Fei} T. Fei,
A construction of non-K\"ahler Calabi-Yau manifolds and new solutions to the Strominger system,
\emph{Adv. Math.} {\bf 302} (2016), 529--550.

\bibitem{FHP}  T. Fei, Z. Huang, S. Picard,
A construction of infinitely many solutions to the Strominger system,
arXiv:1703.10067 [math.DG].

\bibitem{Fei-Yau} T. Fei, S.-T. Yau,
Invariant solutions to the Strominger system on complex Lie groups and their quotients,
\emph{Commun. Math. Phys.} {\bf 338} (2015), 1183--1195.

\bibitem{FIUV09} M. Fern\'andez, S. Ivanov, L. Ugarte, R. Villacampa,
Non-K\"ahler heterotic string compactifications with non-zero fluxes and constant dilaton,
\emph{Commun. Math. Phys.} {\bf 288} (2009), 677--697.

\bibitem{FIUV2014} M. Fern\'andez, S. Ivanov, L. Ugarte, D. Vassilev,
Non-Kaehler heterotic string solutions with non-zero fluxes and non-constant dilaton,
\emph{J. High Energy Physics} JHEP {\bf 06} (2014) 073.

\bibitem{FG} A. Fino, G. Grantcharov,
Properties of manifolds with skew-symmetric torsion and special holonomy,
\emph{Adv. Math.} {\bf 189} (2004), 439--450.

\bibitem{Fu-Yau-alpha-negative}
J-X. Fu, S-T. Yau,
A Monge-Amp\`ere type equation motivated by string theory,
\emph{Commun. Anal. Geom.} {\bf 15} (2007), 29--76.

\bibitem{Fu-Yau-alpha-positive} J-X. Fu, S-T. Yau,
The theory of superstring with flux on non-K\"ahler manifolds and the complex Monge-Amp\`ere equation,
\emph{J. Diff. Geom.} {\bf 78} (2008), 369--428.

\bibitem{MGarcia} M. Garc\'{\i}a-Fern\'andez,
\emph{Lectures on the Strominger system},
arXiv:1609.02615 [math.DG].

\bibitem{Gar-Rubio-Tipler} M. Garc\'{\i}a-Fern\'andez, R. Rubio, C. Tipler,
Infinitesimal moduli for the Strominger system and generalized Killing spinors,
to appear in \emph{Math. Ann.}, doi:10.1007/s00208-016-1463-5.

\bibitem{GP} E. Goldstein, S. Prokushkin,
Geometric model for complex non-K\"aehler manifolds with SU(3) structure,
\emph{Commun. Math. Phys.} {\bf 251} (2004), 65--78.

\bibitem {Hul} C.M. Hull,
Compactifications of the heterotic superstring,
\emph{Physics Letters} {\bf B 178} (4):357-364, 1986.

\bibitem{LUV} A. Latorre, L. Ugarte, R. Villacampa,
On the Bott-Chern cohomology and balanced Hermitian nilmanifolds,
\emph{Internat. J. Math.} {\bf 25} (2014), no. 6, 1450057, 24 pp.

\bibitem{Li-Yau} J. Li, S-T. Yau,
The existence of supersymmetric string theory with torsion,
\emph{J. Diff. Geom.} {\bf 70} (2005), no. 1, 143--181.

\bibitem{MPPS} C. Maclaughlin, H. Pedersen, Y.S. Poon, S. Salamon,
Deformation of 2-step nilmanifolds with abelian complex structures,
\emph{J. Lond. Math. Soc.} {\bf 73} (2006), 173--193.

\bibitem{Mi} M.L. Michelsohn,
On the existence of special metrics in complex geometry,
\emph{Acta Math.} {\bf 149} (1982), no. 3-4, 261--295.

\bibitem{OUV1} A. Otal, L. Ugarte, R. Villacampa,
Invariant solutions to the Strominger system and the heterotic equations of motion,
to appear in \emph{Nuclear Phys.} {\bf B},
arXiv:1604.02851 [math.DG].

\bibitem{PPZ} D.H. Phong, S. Picard, X. Zhang,
The Fu-Yau equation with negative slope parameter,
to appear in \emph{Invent. Math.}, doi:10.1007/s00222-016-0715-z.

\bibitem{PPZ1} D. H. Phong, S. Picard, X. Zhang,
The anomaly flow and the Fu-Yau equation, arXiv:1610.02740 [math.DG].

\bibitem{Pop2} D. Popovici, Deformation openness and closedness of various classes
of compact complex manifolds; examples, \emph{Ann. Sc. Norm. Super. Pisa Cl. Sci. (5)}
{\bf 13} (2014), no. 2, 255--305.

\bibitem{R1} S. Rollenske, Lie-algebra Dolbeault-cohomology and small deformations of nilmanifolds,
\emph{J. London Math. Soc.} {\bf 79} (2009), 346--362.

\bibitem{Sal} S. Salamon,
Complex structures on nilpotent Lie algebras,
\emph{J. Pure Appl. Algebra} {\bf 157} (2001), 311--333.

\bibitem {Str} A. Strominger,
Superstrings with torsion,
\emph{Nuclear Phys.} {\bf B} {\bf 274} (1986), 253.

\bibitem {Tian} G. Tian,
Smoothness of the universal deformation space of compact Calabi-Yau manifolds and its Petersson-Weil metric,
Mathematical aspects of string theory (San Diego, Calif., 1986), 629--646,
Adv. Ser. Math. Phys., 1, World Sci. Publishing, Singapore, 1987.

\bibitem {Todorov} A.N. Todorov,
The Weil-Petersson geometry of the moduli space of SU($n=3$) (Calabi-Yau) manifolds. I,
\emph{Commun. Math. Phys.} {\bf 126} (1989), 325--346.

\bibitem{UV2} L. Ugarte, R. Villacampa, Balanced Hermitian geometry on 6-dimensional nilmanifolds,
\emph{Forum Math.} {\bf 27} (2015),
1025--1070.

\bibitem{Yau78} S.-T. Yau,
On the Ricci curvature of a compact K\"ahler manifold and the complex Monge-Amp\`ere
equation, I,
\emph{Comm. Pure Appl. Math.} {\bf 31} (1978), 339--411.

\end{thebibliography}
\end{document}